%********************************************************************%
%           Fibrewise sectional category                             %
%                                                                    %
%               J.M. Garc\'{\i}a Calcines                                 %
%                                                                    %
%                  Marzo 2013                                        %
%                                                                    %
%********************************************************************%
\documentclass{article}
\usepackage{amssymb}
\usepackage{latexsym}
\usepackage[latin1]{inputenc}

\usepackage{graphicx}
\usepackage{amsthm}
\usepackage{epsfig}

\input xy
\xyoption{all}
\newdir{ >}{{}*!/-12pt/@{>}}

\title{Whitehead and Ganea constructions for fibrewise sectional category}
\author{J.M. Garc\'{\i}a-Calcines * \\[0.3pc]
         }

\date{}

\begin{document}

\maketitle

\footnote{*email address: jmgarcal@ull.es \hfill\break This work
has been supported by Ministerior de Educaci\'on y Ciencia grant
MTM2009-12081 and FEDER. \hfill\break 2000 {\em Mathematics
Subject Classification}: 55R70, 55P90, 55U35, 55M30.
\newline {\em Keywords and phrases}: Fibrewise space, fibrewise homotopy, J-category,
fibrewise sectional category, fibrewise Lusternik-Schnirelmann
category, topological complexity.\hfill\break }

\setlength{\baselineskip}{1.2pc}         %longitud entre l\'{\i}neas.
\setlength{\parskip} {0.3pc}         %longitud entre p\'{a}rrafos.
\newtheorem{theorem}{Theorem}[section]

\newtheorem{lemma}[theorem]{Lemma}
\newtheorem{definition}[theorem]{Definition}
\newtheorem{proposition}[theorem]{Proposition}
\newtheorem{corollary}[theorem]{Corollary}

\newtheorem{remark}[theorem]{Remark}
\newcommand{\Dt}{{\sf Proof:}}
\newcommand{\cqd}{\vspace{-15pt} \hfill$\blacksquare$}

\begin{abstract}
We introduce the notion of fibrewise sectional category via a
Whitehead-Ganea construction. Fibrewise sectional category is the
analogue of the ordinary sectional category in the fibrewise
setting and also the natural generalization of the fibrewise
unpointed LS category in the sense of Iwase-Sakai. On the other
hand the fibrewise pointed version is the generalization of the
fibrewise pointed LS category in the sense of James-Morris. After
giving their main properties we also establish some comparisons
between such two versions.
\end{abstract}

\section*{Introduction}
The sectional category $\mbox{secat}(p)$ of any map
$p:E\rightarrow B$ is the least non negative integer $k$ such that
there exists a cover of $B$ constituted by $k+1$ open subsets on
each of which $p$ has a local homotopy section. When $p$ is a
fibration, then we can consider local strict sections in the
definition, retrieving the usual notion of sectional category (or
Schwarz genus) of $p$ (see \cite{Sch}). This is a lower bound of
the Lusternik-Schnirelmann category of the base space and also a
generalization as $\mbox{secat}(p)=\mbox{cat}(B)$ when $E$ is
contractible. Apart from its usefulness in mathematical problems
such as the computation of the roots of a complex polynomial, the
embedding problem or the classification of bundles, the sectional
category is also crucial for the notion of topological complexity
of a space. The topological complexity of a space $X$, denoted as
$\mbox{TC}(X),$ is the sectional category of the evaluation
fibration $\pi :X^I\rightarrow X\times X,$ $\alpha \mapsto (\alpha
(0),\alpha (1)).$ It was established by Farber \cite{F,F2} in
order to face the motion planning problem in robotics from a
topological perspective. The topological complexity is also
interesting in algebraic topology itself, independently of its
original purpose in robotics, as it is closely related to
difficult tasks such as the immersion problem for real projective
spaces \cite{F-T-Y}. Since its apparition, this relatively new
numerical homotopy invariant has been of great interest for many
researchers working on applied algebraic topology. It is
remarkable the work of Iwase-Sakai \cite{I-S} who related the
topological complexity to what they call fibrewise unpointed LS
category $\mbox{cat}^*_B(-)$ in the fibrewise setting. Namely, if
$\Delta _X$ denotes the diagonal map, $pr_2$ the projection onto
the second factor, and $d(X)\equiv X\stackrel{\Delta _X
}{\longrightarrow }{X\times X}\stackrel{pr_2}{\longrightarrow }X$
the induced fibrewise pointed space over $X$, then they proved
that $\mbox{TC}(X)=\mbox{cat}^*_X(d(X)).$ They also work with a
pointed version in the fibrewise context and consider
$\mbox{TC}^M(X)$ the monoidal topological complexity and
$\mbox{cat}^B_B(-)$ the fibrewise pointed LS category in the sense
of James-Morris \cite{J-M}. In this case the equality
$\mbox{TC}^M(X)=\mbox{cat}^X_X(d(X))$ holds.

The notion of fibrewise unpointed LS category of Iwase-Sakai, and
the notion of fibrewise poin\-ted LS category of James-Morris
suggest a natural generalization, an analogue notion of sectional
category in the category of fibrewise (pointed) spaces over a
space $B.$  The aim of this paper is to establish such a
generalization, the \emph{fibrewise sectional category}, denoted
by $\mbox{secat}_B(-),$ as well as a Whitehead-Ganea approach of
it. We also present its pointed version, $\mbox{secat}_B^B(-),$
and some of their most important properties. In order to present
our work we have divided the paper into four sections. In the
first section we establish a Str{\o}m-type model category and give
some background about fibrewise homotopy such as fibrewise
homotopy pullbacks, pushouts and joins. In the second section we
introduce the main notion of the paper, the one of fibrewise
sectional category, and give its main properties. Among them its
Whitehead-Ganea approach (so it can be considered as an abstract
sectional category in the sense of \cite{K} or \cite{D-GC-R-M-R})
in which we will strongly use the results about fibrewise joins
given in the previous section. This can be summarized in the
following theorem. For details about its statement the reader is
referred to the first sections of the paper:
\begin{theorem}
Let $f:E\rightarrow X$ be any fibrewise map between normal spaces,
or a closed fibrewise cofibration with $X$ normal. Then the
following statements are equivalent:
\begin{enumerate}
\item[(i)] $\mbox{secat}_B(f)\leq n$

\item[(ii)] The diagonal map $\Delta _{n+1}:X\rightarrow \prod
_B^{n+1}X$ factors, up to fibrewise homotopy, through the
fibrewise sectional n-fat wedge
$$\xymatrix{ {X} \ar[r] \ar[dr]_{\Delta _{n+1}} & {T^n_B(f)} \ar[d]^{j_n} \\ & {\prod _B^{n+1}X} }$$

\item[(iii)] The $n$-th fibrewise Ganea map
$p_n:G^n_B(f)\rightarrow X$ admits a fibrewise homotopy section.
\end{enumerate}
\end{theorem}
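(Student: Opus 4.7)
The plan is to establish this as the fibrewise analogue of the classical Whitehead-Ganea characterization of sectional category, following the abstract framework of Doeraene-Garc\'{\i}a-Calcines-Rodr\'{\i}guez-Miguel and of Kahl cited in the excerpt. All constructions take place in the Str{\o}m-type model structure on fibrewise spaces built in Section 1, where fibrewise pullback, pushout and join are already shown to enjoy the needed homotopy invariance. I would therefore begin by recording that the Ganea tower $G^n_B(f)$ is defined inductively by fibrewise joining $G^{n-1}_B(f)$ with $f$ over $X$, while the fibrewise sectional fat wedge $T^n_B(f)\subset \prod_B^{n+1}X$ is the fibrewise union of the $n+1$ pullbacks of $f$ along the coordinate projections; both are invariant under fibrewise equivalences of $f$.

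For (i) $\Rightarrow$ (ii) I would start from a fibrewise categorical cover $\{U_0,\ldots,U_n\}$ of $X$ witnessing $\mbox{secat}_B(f)\leq n$, together with local fibrewise homotopy sections $s_i$ of $f$ over each $U_i$. A fibrewise partition of unity subordinate to this cover, available thanks to the normality hypothesis (and extendable via the closed cofibration assumption in the alternative case), allows the $s_i$ to be patched into a single fibrewise map $X\to \prod_B^{n+1}X$ taking values in $T^n_B(f)$, because at every point at least one of the $n+1$ coordinates lies in some $U_i$ and hence factors through $f$. By construction this map is a fibrewise homotopy lift of $\Delta_{n+1}$.

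The equivalence (ii) $\Leftrightarrow$ (iii) is the abstract Whitehead-Ganea duality. Using the results on fibrewise joins from Section 1, I would identify $G^n_B(f)$ with the fibrewise homotopy pullback of $j_n:T^n_B(f)\to \prod_B^{n+1}X$ along $\Delta_{n+1}$. From this identification the universal property of the pullback immediately gives a bijection, up to fibrewise homotopy, between fibrewise factorizations of $\Delta_{n+1}$ through $T^n_B(f)$ and fibrewise homotopy sections of $p_n$.

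Finally, (iii) $\Rightarrow$ (i) is proved by pulling back a fibrewise homotopy section of $p_n$ through the standard open decomposition of the iterated join into $n+1$ pieces, yielding the desired cover of $X$ by $n+1$ fibrewise open sets on each of which $f$ admits a local fibrewise homotopy section. The main obstacle I expect is verifying that the strict iterated pushout/pullback diagrams defining $G^n_B(f)$ and $T^n_B(f)$ are genuine homotopy pushouts/pullbacks in the Str{\o}m-type model structure; the hypotheses that either $X$ be normal, or that $f$ be a closed cofibration with $X$ normal, are precisely what guarantees the requisite cofibrancy and properness conditions under which the strict constructions compute their derived counterparts and the Whitehead-Ganea duality becomes formal.
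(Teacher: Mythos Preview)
Your proposal is correct and follows the paper's overall architecture closely. In particular, your treatment of (ii) $\Leftrightarrow$ (iii) is exactly the paper's: it quotes the fibrewise homotopy pullback square relating $G^n_B(f)$, $T^n_B(f)$, $X$ and $\prod_B^{n+1}X$ (Lemma~\ref{WG}) and then invokes the weak universal property. Your (i) $\Rightarrow$ (ii) is also the paper's argument in slightly different language: where you speak of a ``fibrewise partition of unity'', the paper works directly with Urysohn bump functions $h_i$ obtained from a shrinking $A_i\subseteq\Theta_i\subseteq B_i\subseteq U_i$ of the cover, and uses them to reparametrize the local homotopies $H_i$ into a global fibrewise homotopy $L:\Delta_{n+1}\simeq_B j_n\varphi$.

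The one genuine difference is the return implication. You close the loop via (iii) $\Rightarrow$ (i), appealing to the standard open decomposition of the iterated join into $n+1$ pieces, each deformation retracting onto a copy of $E$; this is the classical Schwarz argument transported to the fibrewise setting. The paper instead proves (ii) $\Rightarrow$ (i) directly: having reduced to the case where $f:E\hookrightarrow X$ is a closed fibrewise cofibration, it uses Corollary~\ref{rem2} to produce an open halo $U\supseteq E$ with a fibrewise strong deformation retraction $r:U\to E$, and then sets $U_i=\varphi_i^{-1}(U)$ for the components $\varphi_i$ of the lift $\varphi:X\to T^n_B(f)\subset\prod_B^{n+1}X$. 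Your route is perfectly valid but requires checking that the iterated fibrewise join carries the expected open decomposition with the right fibrewise deformation retractions; the paper's route sidesteps this by working on the fat-wedge side, where the needed open set is supplied for free by the fibrewise Str{\o}m structure on $(X,E)$.
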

It is also important the relationship of fibrewise sectional
category with the ordinary sectional category. In this sense we
have obtained the following result, which is not true in general.
Here by a fibrant space over $B$ we mean a fibrewise space over
$B$ in which the projection is a Hurewicz fibration.
\begin{theorem}
Let $f:E\rightarrow X$ be a fibrewise map between fibrant spaces
over $B.$ Then $\mbox{secat}_B(f)=\mbox{secat}(f).$
\end{theorem}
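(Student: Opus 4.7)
My plan is to prove the two inequalities separately. The direction $\mbox{secat}(f) \leq \mbox{secat}_B(f)$ is immediate, since any fibrewise categorical open cover of $X$ with fibrewise local homotopy sections is, after forgetting the fibrewise structure, an ordinary categorical cover. So the substance lies entirely in the reverse inequality $\mbox{secat}_B(f) \leq \mbox{secat}(f)$.

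Assuming $\mbox{secat}(f) \leq n$, I would pick an ordinary open cover $\{U_i\}_{i=0}^{n}$ of $X$ together with local sections $s_i : U_i \to E$ of $f$ and ordinary homotopies $H_i : U_i \times I \to X$ from $f s_i$ to $\mbox{inc}_{U_i}$, and then rigidify each pair $(s_i, H_i)$ to fibrewise data in two steps. First, since $f$ is fibrewise, $p_E s_i = p_X f s_i$, so $p_X H_i$ is a homotopy in $B$ from $p_E s_i$ to $p_X|_{U_i}$. Fibrancy of $E$ makes $p_E : E \to B$ a Hurewicz fibration, so I lift $p_X H_i$ with initial condition $s_i$ to $\widetilde{s}_i : U_i \times I \to E$ and set $s_i' := \widetilde{s}_i(-,1)$. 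By construction $p_E s_i' = p_X|_{U_i}$, so $s_i'$ is a fibrewise local section of $f$.

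Second, I concatenate the reverse of $f \widetilde{s}_i$ with $H_i$ to produce an ordinary homotopy $K_i : U_i \times I \to X$ from $f s_i'$ to $\mbox{inc}_{U_i}$. Its projection $p_X K_i$ is precisely the loop $\overline{p_X H_i} \cdot p_X H_i$, which carries a canonical null-homotopy $\Phi : U_i \times I \times I \to B$ that is constant at $p_X|_{U_i}$ for $r=1$ and along each of $s=0,1$. Applying the homotopy lifting extension property of the Hurewicz fibration $p_X : X \to B$ (from the fibrancy of $X$) to the cofibrant pair $(U_i \times I,\, U_i \times \{0,1\})$, I lift $\Phi$ with initial value $K_i$ at $r=0$ and with constant values $f s_i'$ and $\mbox{inc}_{U_i}$ on $U_i \times \{0\} \times I$ and $U_i \times \{1\} \times I$ respectively. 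Restricting the resulting lift to $r=1$ yields the sought fibrewise homotopy $f s_i' \simeq_B \mbox{inc}_{U_i}$, so each $(U_i, s_i')$ is a fibrewise categorical piece and $\mbox{secat}_B(f) \leq n$.

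The main obstacle is this second step. The naive concatenation $K_i$ is never fibrewise --- its projection to $B$ is in general a nonconstant (though canonically null-homotopic) loop --- and contracting this loop while holding both ends of the homotopy fixed requires the \emph{relative} HLP of $p_X$, not merely the absolute one. The two fibrancy hypotheses enter essentially, each exactly once: fibrancy of $E$ to promote $s_i$ to a fibrewise section, and fibrancy of $X$ to promote the ordinary homotopy between $f s_i'$ and the inclusion to a fibrewise one.
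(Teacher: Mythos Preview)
Your argument is correct, but the paper takes a noticeably cleaner route. Instead of directly rigidifying each local section and each homotopy, the paper first factors $f$ in $\mathbf{Top}$ as a homotopy equivalence $\lambda:E\to\widehat{E}$ followed by a Hurewicz fibration $p:\widehat{E}\to X$. Giving $\widehat{E}$ the projection $p_Xp$ (a composite of Hurewicz fibrations, hence itself one) makes $\widehat{E}$ fibrant over $B$; since $E$ is also fibrant, Dold's theorem promotes the ordinary homotopy equivalence $\lambda$ to a \emph{fibrewise} homotopy equivalence, so $\mbox{secat}(f)=\mbox{secat}(p)$ and $\mbox{secat}_B(f)=\mbox{secat}_B(p)$. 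Now any \emph{strict} local section $s:U\to\widehat{E}$ of $p$ automatically satisfies $p_{\widehat{E}}s=p_Xps=p_X|_U$, so it is already a fibrewise map --- no lifting is needed at all. This dispatches the nontrivial inequality in one line once the factorization is in place.

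Your approach trades this structural shortcut for explicit lifting: you use the fibrancy of $E$ to make the section fibrewise, and then the relative homotopy lifting property of $p_X$ against the cofibration $U_i\times\{0,1\}\hookrightarrow U_i\times I$ to straighten the connecting homotopy. This is entirely sound and has the virtue of being self-contained --- it does not invoke Dold's theorem or any factorization machinery --- but it is longer and requires carefully tracking the canonical null-homotopy of $\overline{p_XH_i}\cdot p_XH_i$ together with its endpoint behaviour. The paper's method hides all of that lifting work inside the single appeal to Dold, at the price of that extra dependency.
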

The third section is dedicated to the fibrewise pointed case,
where similar results are displayed and proved. Finally, in the
last section we compare the unpointed and the pointed versions. As
we will see, they are not so different as one might think. We
present two results, being quite general the first of them.
\begin{theorem}
Let $f:E\rightarrow X$ be a fibrewise pointed map in
$\mathbf{Top}_w(B)$ between normal spaces, or a closed fibrewise
cofibration with $X$ normal. Then
$$\mbox{secat}_B(f)\leq \mbox{secat}_B^B(f)\leq \mbox{secat}_B(f)+1$$
\end{theorem}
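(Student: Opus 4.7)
The plan is to treat the two inequalities separately, relying on the open-cover characterizations of $\mbox{secat}_B(-)$ and $\mbox{secat}_B^B(-)$ together with the pointed analogue of Theorem 1.1 developed in Section 3.

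The inequality $\mbox{secat}_B(f)\leq\mbox{secat}_B^B(f)$ is essentially formal: the forgetful functor from fibrewise pointed to fibrewise spaces preserves open sets, sections, and homotopies, so a fibrewise pointed homotopy section of $f$ over an open set is \emph{a fortiori} a fibrewise homotopy section. Consequently, any pointed sectional cover of cardinality $n+1$ is also an unpointed sectional cover of the same cardinality. Equivalently, a pointed fibrewise homotopy section of the pointed Ganea map yields, upon forgetting the section structure, a fibrewise homotopy section of the unpointed Ganea map; this gives the left-hand inequality.

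For $\mbox{secat}_B^B(f)\leq\mbox{secat}_B(f)+1$, assume $\mbox{secat}_B(f)\leq n$ and choose an unpointed sectional cover $\{U_0,\dots,U_n\}$ of $X$ together with fibrewise homotopy sections $\sigma_i:U_i\to f^{-1}(U_i)$. The strategy is to adjoin one extra open set $V$, namely an open fibrewise neighborhood of the image of the structural section $s_X:B\to X$ which fibrewise pointed deformation retracts onto $s_X(B)$; the composite $V\to B\to E$ obtained from the structural section of $E$ then serves as a pointed fibrewise homotopy section of $f$ over $V$. The resulting family $\{V,U_0,\dots,U_n\}$ has cardinality $n+2$. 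Next, I would upgrade each $\sigma_i$ to a pointed homotopy section over $U_i$ by deforming the witnessing homotopy so that it agrees with the structural data along $s_X(B)\cap U_i$.

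The main obstacle is this pointed upgrade, which is precisely where the stated hypotheses are used: normality of $E$ and $X$ (or directly the closed fibrewise cofibration assumption on $f$) guarantees that the structural sections $s_X:B\hookrightarrow X$ and $s_E:B\hookrightarrow E$ are closed fibrewise cofibrations, so the fibrewise homotopy extension property converts any fibrewise homotopy that already coincides with the structural data over $s_X(B)\cap U_i$ into a genuine fibrewise pointed homotopy. With this refinement in place, $\{V,U_0,\dots,U_n\}$ becomes a pointed sectional cover, giving $\mbox{secat}_B^B(f)\leq n+1$ as desired.
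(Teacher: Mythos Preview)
Your first inequality is fine. The gap is in the second one, specifically in the ``upgrade'' step.

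First, the open sets $U_i$ in an unpointed sectional cover need not contain $s_X(B)$, whereas the definition of \emph{fibrewise pointed sectional} requires this containment. Writing $s_X(B)\cap U_i$ acknowledges the issue but does not resolve it: if $s_X(B)\not\subseteq U_i$ then $U_i$ simply cannot appear in a pointed sectional cover. Second, and more seriously, even when $s_X(B)\subseteq U_i$ the homotopy extension property does not supply what you need. To deform $\sigma_i$ to a pointed section you would first have to know that $\sigma_i\circ s_X$ and $s_E$ are fibrewise homotopic \emph{as maps $B\to E$}; all you are given is $f\sigma_i s_X\simeq_B f s_E$ in $X$, and there is no reason this lifts along $f$ to a homotopy in $E$. (Also, normality of $E$ and $X$ does not make the structural sections closed fibrewise cofibrations; that is the content of the hypothesis $E,X\in\mathbf{Top}_w(B)$, and under the alternative hypothesis ``$f$ a closed fibrewise cofibration with $X$ normal'' well-pointedness of $X$ is not assumed at all.)

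The paper's argument avoids both obstacles by never attempting to point the $\sigma_i$. After reducing to the case where $f:E\hookrightarrow X$ is a closed fibrewise cofibration (so that $s_X(B)\subseteq E$), it \emph{removes} $E$ from each $U_i$, shrinks via normality to get $\overline{W_i}\subseteq U_i\setminus E$, and then sets $O_i=W_i\cup\mathcal N$, where $\mathcal N\subseteq N$ is a halo neighbourhood of $E$ chosen disjoint from every $W_i$. On $W_i$ one uses the original unpointed homotopy $H_i$; on $\mathcal N$ one uses the strong deformation retraction of $N$ onto $E$, which is already rel $E$ and hence rel $s_X(B)$. Because $W_i\cap\mathcal N=\emptyset$ these glue to a well-defined fibrewise pointed homotopy on $O_i$, and the halo $N$ itself serves as the $(n+2)$nd open set. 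The pointed structure thus comes entirely from the halo of $E$, not from any modification of the $\sigma_i$.
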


The second one, which closes our paper, is more restrictive but
also interesting. It is a generalization of a result given by
Dranishnikov when he compares topological complexity and monoidal
topological complexity.
\begin{theorem}
Let $f:E\rightarrow X$ a fibrewise pointed map between pointed
fibrant and cofibrant spaces over $B.$ Suppose that $B$ is a
CW-complex and $X$ a paracompact Hausdorff space satisfying the
following conditions:
\begin{enumerate}
\item[(i)] $f:E\rightarrow X$ is a $k$-equivalence ($k\geq 0$);

\item[(ii)] $\mbox{dim}(B)<(\mbox{secat}_B(f)+1)(k+1)-1.$
\end{enumerate}
\noindent Then $\mbox{secat}_B(f)=\mbox{secat}_B^B(f).$
\end{theorem}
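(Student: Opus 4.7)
The starting point is Theorem 3, which already gives
$n:=\mbox{secat}_B(f)\leq \mbox{secat}_B^B(f)\leq n+1$.
Hence the task reduces to showing the reverse inequality $\mbox{secat}_B^B(f)\leq n$. Using the Whitehead-Ganea characterization (Theorem 1) and its pointed analogue established in Section 3, this is equivalent to the following lifting question: starting from the fibrewise homotopy section $s:X\to G^n_B(f)$ of the $n$-th unpointed Ganea map (which exists by hypothesis, since $\mbox{secat}_B(f)\leq n$), one must deform $s$ through a fibrewise homotopy to a map factoring through the natural comparison $\iota: G^n_B(f)_B\to G^n_B(f)$ from the pointed to the unpointed Ganea construction.

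The key computation is the connectivity of the homotopy fibre $F$ of $\iota$. Using the description of the Ganea construction as iterated fibrewise join with $f$, the fibre $F$ should be fibrewise equivalent to the $(n+1)$-fold fibrewise join of the homotopy fibre $F_f$ of $f$. Hypothesis (i) ensures $F_f$ is $(k-1)$-connected, and the classical connectivity formula
$$\mathrm{conn}(A*B)=\mathrm{conn}(A)+\mathrm{conn}(B)+2$$
iterated $n$ times then yields
$$\mathrm{conn}(F)\geq (n+1)(k-1)+2n=(n+1)(k+1)-2.$$
Here the Str{\o}m-type model structure of Section 1 and the pointed fibrant/cofibrant hypotheses are used to turn $\iota$ into a genuine fibrewise fibration with a well-behaved fibre.

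The proof would then be concluded by classical cellular obstruction theory over the CW-base $B$. The obstructions to deforming $s$ through $\iota$ lie, with appropriate local coefficients, in $H^{j+1}(B;\pi_j(F))$. By the connectivity estimate they vanish for $j\leq (n+1)(k+1)-2$, while hypothesis (ii) reads $\dim(B)\leq (n+1)(k+1)-2$, forcing $H^{j+1}(B;-)=0$ in the complementary range $j+1\geq (n+1)(k+1)-1$. Together these two ranges exhaust all $j$, hence every obstruction vanishes and the desired pointed section exists, giving $\mbox{secat}_B^B(f)\leq n$ as required.

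The main obstacle I anticipate is twofold. First, the precise identification of the fibre of $\iota$ with the $(n+1)$-fold fibrewise join of $F_f$; this is the technical heart of the argument and is where the pointed/cofibrancy hypotheses on $E$, $X$ and $f$ must be used, guaranteeing that the pointed and unpointed Ganea fibres line up as expected. Second, the justification that the obstruction calculation really takes place over the CW-complex $B$ rather than over $X$ (which is only paracompact Hausdorff). This last point is handled by exploiting the fibrancy of $X$ over $B$: maps of fibrewise spaces into $\iota$ correspond to sections of its pullback, and the resulting obstruction theory can then be carried out cellularly along $B$, which is exactly why the numerical hypothesis is phrased in terms of $\dim(B)$ rather than $\dim(X)$.
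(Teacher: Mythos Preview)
Your numerical estimates are on target, but the setup contains a genuine misstep. In this paper the pointed and unpointed Ganea constructions \emph{coincide}: Section~3 explicitly states that the fibrewise pointed $n$-fat wedge and $n$-th Ganea map ``can be chosen as $j_n:T^n_B(f)\rightarrow \prod_B^{n+1}X$ and $p_n:G^n_B(f)\rightarrow X$, the ones given in the unpointed case.'' So your comparison map $\iota:G^n_B(f)_B\to G^n_B(f)$ is the identity, and ``deforming $s$ so as to factor through $\iota$'' is vacuous. The fibre you compute --- the $(n+1)$-fold join of $F_f$, with connectivity $(n+1)(k+1)-2$ --- is the fibre of the Ganea map $p_n$ itself, not of any comparison $\iota$. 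The actual task is different: given a fibrewise section $\sigma$ of $p_n$, one must correct its \emph{restriction along $s_X:B\to X$}, i.e.\ homotope $\sigma s_X$ to the canonical section $s_{G^n_B(f)}$ through lifts of $s_X$, and then propagate this correction to all of $X$.

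The paper carries this out rather differently from your obstruction-theory sketch. It first replaces $f$ by a Hurewicz fibration $p:\widehat{E}\to X$ (using the fibrant/cofibrant hypotheses to ensure this is a fibrewise pointed homotopy equivalence), so that $\mbox{secat}_B(f)=\mbox{secat}(p)$ and one has a \emph{strict} section $\sigma$ of the classical iterated join $j_n^p:\ast^n_X\widehat{E}\to X$. The key step is then Lemma~4.2 (a fibrewise Whitehead theorem, quoted from May--Sigurdsson): since $j_n^p$ is an $((n+1)(k+1)-1)$-equivalence over $X$ between fibrant spaces over $X$, and $\dim(B)<(n+1)(k+1)-1$, the induced map $(j_n^p)_*:[B,\ast^n_X\widehat{E}]_X\to[B,X]_X$ is a bijection. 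This forces $\sigma s_X\simeq_X s_{\ast^n_X\widehat{E}}$. Finally, because $s_X$ is a closed cofibration and $j_n^p$ a Hurewicz fibration, the ordinary Relative Homotopy Lifting Property extends this homotopy on $B$ to a homotopy on $X$ ending at a strict pointed section $\sigma'$. No cellular obstruction theory over $B$ is invoked; the argument is a single application of the fibrewise Whitehead lemma followed by RHLP. Your obstruction-theoretic route could perhaps be made to work once reformulated as ``homotope $\sigma s_X$ to the canonical section inside the fibre of $p_n$'', but as written it is aimed at the wrong map.
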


Of course, in all these theorems we also obtain corollaries
replacing in the corresponding statements fibrewise sectional
category by fibrewise unpointed LS category or fibrewise pointed
sectional category by fibrewise pointed LS category. Therefore, by
Iwase-Sakai, we also retrieve known results about topological
complexity and monoidal topological complexity.

\section{Fibrewise homotopy theory.}

We begin by giving some preliminary definitions and results on
fibrewise homotopy theory that will be important throughout all
the paper. For basic notation and terminology theory we have
considered the reference \cite{C-J}.

Let $B$ be a fixed topological space. A \emph{fibrewise space}
over $B$ consists of a pair $(X,p_X),$ where $X$ is a topological
space and $p_X:X\rightarrow B$ a map from $X$ to $B.$ The map
$p_X$ is usually called the \emph{projection}. If there is not
ambiguity we will denote $X$ the fibrewise space $(X,p_X).$ If $X$
and $Y$ are fibrewise spaces, then a fibrewise map (over $B$)
$f:X\rightarrow Y$ is just a map $f:X\rightarrow Y$ satisfying
$p_Yf=p_X$
$$\xymatrix{
{X} \ar[rr]^f \ar[dr]_{p_X} & & {Y} \ar[dl]^{p_Y} \\
 & {B} & }$$

The corresponding category of fibrewise spaces and fibrewise maps
over $B$ will be denoted as $\mathbf{Top}_B.$ %Regarding $B$ as a
%fibrewise space over itself using the identity as the projection,
%we note that $B$ is the final object in $\mathbf{Top}_B.$ The
%initial object is $\emptyset .$

If $X$ and $Y$ are fibrewise spaces, then the binary fibrewise
product of $X$ and $Y$ will be denoted by $X\times _BY=\{(x,y)\in
X\times Y:p_X(x)=p_Y(y)\}.$ We will denote by $\prod ^n_BX$ the
product of n copies of a given fibrewise space $X.$

The \emph{fibrewise cylinder} of a fibrewise space $X$ is just the
usual product space $X\times I$ (where $I$ denotes the closed unit
interval $[0,1]$) together with the composite ${X\times
I}\stackrel{pr}{\longrightarrow }X\stackrel{p_X}{\longrightarrow
}B$ as the projection $p_{X\times I}.$ We will denote by $I_B(X)$
the fibrewise cylinder of $X$. The definition of \emph{fibrewise
homotopy} $\simeq _B$ between fibrewise maps comes naturally as
well as the notion of \emph{fibrewise homotopy equivalence}.

\subsection{Axiomatic homotopy for fibrewise spaces}

Now, for any fibrewise space $X$ take the following pullback in
the category $\mathbf{Top}$ of spaces and maps. Here $X^I$ (and
$B^I$) denotes the free path-space provided with the cocompact
topology, $p_X^I$ is the obvious map induced by $p_X$ and
$c:B\rightarrow B^I$ is the map that carries any $b\in B$ to the
natural constant path $c_b$ in $B^I$.
$$\xymatrix{
{P_B(X)} \ar[r] \ar[d] & {X^I} \ar[d]^{p_X^I} \\
{B} \ar[r]_c & {B^I} }$$ Thus, $P_B(X)=B\times
_{B^I}X^I=\{(b,\alpha )\in B\times X^I:c_b=p_X\alpha \}$ being
$P_B(X)\rightarrow B$ the obvious projection onto $B.$ $P_B(X)$ is
called the \emph{fibrewise cocylinder} of $X$ (or \emph{fibrewise
free path space} of $X$). The fibrewise cylinder and fibrewise
cocylinder constructions give rise to functors
$$I_B,P_B:\mathbf{Top}_B\rightarrow \mathbf{Top}_B$$
Associated to the functor $I_B$ there are defined natural
transformations $i_0,i_1:X\rightarrow I_B(X)$ and $\rho
:I_B(X)\rightarrow X$ (given by $i_{\varepsilon
}(x)=(x,\varepsilon)$ and $\rho (x,t)=x$). Analogously, associated
to $P_B$ there are defined natural transformations
$d_0,d_1:P_B(X)\rightarrow X$ and $c:X\rightarrow P_B(X)$ (given
by $d_{\varepsilon }(b,\alpha )=\alpha (\varepsilon )$ and
$c(x)=(p_X(x),c_x)$). Moreover, we have that $(I_B,P_B)$ is an
\emph{adjoint pair} in the sense of Baues (see \cite[p.29]{B}).

A fibrewise map$j:A\rightarrow X$ is said to be a \emph{fibrewise
cofibration} (\emph{over $B$}) if it satisfies the Homotopy
Extension Property, that is, for any fibrewise map $f:X\rightarrow
Y$ and any fibrewise homotopy $H:I_B(A)\rightarrow Y$ such that
$Hi_0=fj,$ there exists a fibrewise homotopy
$\widetilde{H}:I_B(X)\rightarrow Y$ such that $\widetilde{H}i_0=f$
and $\widetilde{H}I_B(j)=H$
$$\xymatrix{
{A} \ar[d]_j \ar[r]^{i_0} & {I_B(A)} \ar[d]^H
\ar@/^2pc/[ddr]^{I_B(j)} & \\ {X} \ar[r]^f \ar@/_2pc/[drr]_{i_0} &
{Y} &
\\ & & {I_B(X)} \ar@{.>}_{\widetilde{H}}[ul]}$$

As known, the fibrewise cofibrations are cofibrations in the usual
sense. One has just to take into account that, for a given space
$Z,$ the product $B\times Z$ is a fibrewise space considering the
canonical projection $B\times Z\rightarrow B.$ Therefore,
$j:A\rightarrow X$ is a \emph{fibrewise embedding}, that is,
$j:A\rightarrow j(A)$ is a fibrewise homeomorphism. Then we can
consider, without loss of generality, that the fibrewise
cofibrations are pairs of the form $(X,A).$ Such pairs are also
called \emph{fibrewise cofibred pairs}.

An important characterization of fibrewise cofibred pair, slightly
changed in \cite[Prop 5.2.4]{M-S} and also proved in \cite{C-J} in
the closed case, is given by what is called a \emph{fibrewise
Str{\o}m structure}.

\begin{proposition}\label{strom}
Let $(X,A)$ be a fibrewise pair. Then $(X,A)$ is fibrewise
cofibred if and only if $(X,A)$ admits a fibrewise Str{\o}m
structure, that is, a pair $(\varphi ,H)$ consisting of:
\begin{enumerate}

\item[(i)] A map $\varphi :X\rightarrow I$ satisfying $A\subseteq \varphi
^{-1}(\{0\});$

\item[(ii)] A fibrewise homotopy $H:I_B(X)\rightarrow X$
satisfying $H(x,0)=x,$ $H(a,t)=a$ for all $x\in X,$ $a\in A,$
$t\in I$, and $H(x,t)\in A$ whenever $t>\varphi (x).$
\end{enumerate}
If $A$ is closed the $\varphi $ can be taken so that $A=\varphi
^{-1}(\{0\}).$
\end{proposition}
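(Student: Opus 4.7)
The plan is to adapt Str{\o}m's classical characterization of cofibrations to the fibrewise setting. The essential observation is that $I_B(X)$ has underlying space $X\times I$; only the projection to $B$ differs. Consequently every construction from the non-parametric proof transfers verbatim, and I only need to check at each step that the maps produced commute with the projections to $B$, which is automatic since the input data are already fibrewise.

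For the direction ``fibrewise cofibred $\Rightarrow$ Str{\o}m structure'', I would form $M:=X\times\{0\}\cup I_B(A)$ as a fibrewise subspace of $I_B(X)$ and apply the HEP to the fibrewise map $(\mathrm{id}_X,0):X\to M$ together with the fibrewise homotopy $I_B(A)\hookrightarrow M$. The resulting filling is a fibrewise retraction $r:I_B(X)\to M$, and writing its coordinates $r(x,t)=(u(x,t),v(x,t))$ I would set
$$H(x,t):=u(x,t),\qquad \varphi(x):=\sup_{t\in I}\max\{t-v(x,t),\,0\}.$$
The retraction identities $r(x,0)=(x,0)$ and $r(a,t)=(a,t)$, together with $r(x,t)\in M$, yield (i) and (ii) directly: if $t>\varphi(x)$ then $v(x,t)>0$, which forces $u(x,t)\in A$. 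In the closed case, $\varphi(x)=0$ implies $v(x,t)\geq t$, hence $u(x,t)\in A$ for every $t>0$; taking $t\to 0^+$ gives $x=u(x,0)\in\overline{A}=A$ and therefore $A=\varphi^{-1}(\{0\})$.

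For the converse, given an HEP problem $(f:X\to Y,\,G:I_B(A)\to Y)$ with $Gi_0=fj$, I would define the filling by the piecewise rule
$$\widetilde{H}(x,t):=\left\{\begin{array}{ll} fH(x,t), & \mbox{if } t\leq \varphi(x),\\ G(H(x,t),\,t-\varphi(x)), & \mbox{if } t\geq \varphi(x),\end{array}\right.$$
and then verify the identities $\widetilde{H}i_0=f$ and $\widetilde{H}\circ I_B(j)=G$. I expect the main obstacle to be continuity on the overlap $t=\varphi(x)$: the two branches agree only when $H(x,\varphi(x))\in A$, which follows from (ii) by a limiting argument whenever $A$ is closed. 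In full generality, I would bypass this subtlety by reformulating the extension as $\widetilde{H}:=\Phi\circ r$, where $r:I_B(X)\to M$ is the fibrewise retraction recovered from $(\varphi,H)$ via $r(x,t):=(H(x,t),\max\{0,t-\varphi(x)\})$, and $\Phi:M\to Y$ is the fibrewise map defined by $f$ on $X\times\{0\}$ and by $G$ on $I_B(A)$, well-defined and continuous exactly because $Gi_0=fj$.
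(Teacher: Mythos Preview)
The paper does not supply its own proof of this proposition; it merely cites May--Sigurdsson [Prop.~5.2.4] for the general statement and Crabb--James for the closed case. Your approach is the standard Str{\o}m argument transported fibrewise, and the forward direction is correct: applying the fibrewise HEP with target $M=X\times\{0\}\cup I_B(A)$ produces the retraction $r=(u,v)$, your formulas for $H$ and $\varphi$ give a fibrewise Str{\o}m structure (continuity of $\varphi$ follows from compactness of $I$), and the closed-case refinement is exactly right.

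The converse has a genuine gap. You correctly flag a continuity problem in the piecewise formula (a), but your fix (b) does not resolve it: the compatibility $Gi_0=fj$ makes $\Phi:M\to Y$ \emph{well-defined}, not \emph{continuous} for the subspace topology that $M$ inherits from $X\times I$. The map $\Phi$ is trivially continuous for the pushout (quotient) topology on $M$, but when $A$ is not closed that topology can be strictly finer than the subspace topology, and your retraction $r$ is only continuous into the latter. Indeed, unwinding $\Phi\circ r$ gives back exactly formula (a), with the same boundary issue: if $(x_n,t_n)\to(x_0,\varphi(x_0))$ with $t_n>\varphi(x_n)$ and $H(x_0,\varphi(x_0))\in\overline{A}\setminus A$, then the values $G(H(x_n,t_n),\,t_n-\varphi(x_n))$ need not converge to $f(H(x_0,\varphi(x_0)))$, because the points $H(x_n,t_n)\in A$ do not converge in $A$ and $G$ is only defined on $A\times I$. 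When $A$ is closed the two topologies on $M$ coincide (both $X\times\{0\}$ and $A\times I$ are closed in $M$) and your argument is complete---which is in fact the only case the paper actually uses (the class $\overline{cof}_B$). For general $A$ one needs the more delicate treatment in Str{\o}m's original notes or in May--Sigurdsson.
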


An interesting consequence of Proposition \ref{strom} is the
following fact. Recall that given $A,U$ subspaces of a topological
space $X,$ $U$ is said to be a \emph{halo} of $A$ in $X$ if there
exists a map $\varphi  :X\rightarrow I$ such that $A\subseteq
\varphi ^{-1}(\{0\})$ and $\varphi ^{-1}([0,1))\subseteq U.$

\begin{corollary}\label{rem2}
Given $(X,A)$ any fibrewise cofibred pair, then $A$ is a fibrewise
strong deformation retract of an open neighborhood $U$ in $X.$
Such open subset $U$ is a halo of $A.$
\end{corollary}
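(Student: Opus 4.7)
The plan is to unpack the fibrewise Str{\o}m structure furnished by Proposition \ref{strom} and use it directly to build both the halo and the fibrewise deformation. Applying the proposition to the cofibred pair $(X, A)$ yields a pair $(\varphi, H)$ with $\varphi: X \to I$ satisfying $A \subseteq \varphi^{-1}(\{0\})$ and $H: I_B(X) \to X$ satisfying the two Str{\o}m conditions. Set $U := \varphi^{-1}([0, 1))$. Openness of $U$ is immediate from continuity of $\varphi$, and $A \subseteq \varphi^{-1}(\{0\}) \subseteq U$, so $U$ is an open neighborhood of $A$; moreover $\varphi$ itself witnesses that $U$ is a halo of $A$, since $\varphi^{-1}([0, 1)) = U$.

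For the fibrewise retraction, set $r: U \to A$ by $r(u) := H(u, 1)$. Because $1 > \varphi(u)$ for every $u \in U$, Str{\o}m condition (ii) places $r(u)$ inside $A$, and $r$ is a fibrewise map since $H$ is a fibrewise homotopy. The candidate fibrewise strong deformation is then the restriction $K := H|_{I_B(U)}$: the identities $K(u, 0) = u$, $K(u, 1) = r(u) \in A$, and $K(a, t) = a$ for all $a \in A, t \in I$ can be read off directly from the Str{\o}m axioms, and fibrewise-ness is inherited from $H$.

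The main obstacle is to verify that $K$ in fact factors through $U$ rather than merely through $X$. That is, for $u \in U$ the trajectory $t \mapsto H(u, t)$ should never cross the level set $\varphi^{-1}(\{1\})$ before collapsing into $A$. I would address this by refining the Str{\o}m pair: either by reparametrizing $H$ via $(u, t) \mapsto H(u, t \cdot (1 + \varphi(u))/2)$, which shortens each trajectory so that it reaches $A$ quickly and has less room to wander, or more robustly by replacing $\varphi$ with $\varphi'(x) := \sup_{t \in I} \varphi(H(x, t))$ so that $\varphi$ is non-increasing along $H$-trajectories. The latter immediately yields $\varphi(K(u, t)) \leq \varphi(u) < 1$, hence $K(u, t) \in U$, at the cost of checking that $\varphi'$ is continuous and still meets the Str{\o}m conditions, which I expect to be the technically delicate step in the argument.
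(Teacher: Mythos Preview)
Your basic construction---take $U=\varphi^{-1}([0,1))$, set $r(u)=H(u,1)$, and restrict $H$ to $I_B(U)$---is exactly what the paper has in mind (no explicit proof is given there; the corollary is simply recorded as a consequence of the Str{\o}m structure). For the paper's purposes this is already enough: in both places where the corollary is invoked (the proofs of Theorem~\ref{three-notions} and Theorem~\ref{first-condition}) the deformation is only used as a fibrewise homotopy $I_B(U)\to X$, not $I_B(U)\to U$. So the ``obstacle'' you flag, while a legitimate concern for the literal reading of \emph{strong deformation retract}, is not actually needed downstream.

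If you do want the homotopy to stay inside $U$, neither of your proposed fixes works as written. The reparametrisation $(u,t)\mapsto H(u,t(1+\varphi(u))/2)$ does land in $A$ at $t=1$, but for intermediate $t$ with $t(1+\varphi(u))/2\le\varphi(u)$ the image is still uncontrolled. Your second idea also slips: setting $\varphi'(x)=\sup_{t}\varphi(H(x,t))$ and $U'=(\varphi')^{-1}([0,1))$, one gets for $u\in U'$ only $\varphi(H(u,t))\le\varphi'(u)<1$, i.e.\ the trajectory stays in the \emph{old} $U$, not in $U'$; and $\varphi'$ is \emph{not} non-increasing along $H$-trajectories, since $\varphi'(H(u,s))=\sup_t\varphi(H(H(u,s),t))$ involves iterating $H$ and bears no obvious relation to $\varphi'(u)$. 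A correct route to the stronger statement is to first show that $(U,A)$ is itself a (fibrewise) cofibred pair with $A$ closed in $U$, and then apply the Str{\o}m structure for that pair; but again, the paper never needs this refinement.
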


 We will be particularly interested in \emph{closed}
fibrewise cofibred pairs (or closed fibrewise cofibrations), which
are closed fibrewise pairs $(X,A)$ (i.e., $A$ is a closed subspace
of $X$) such that $(X,A)$ is fibrewise cofibred. As in the
classical topological case, this is not a very restrictive
condition; for instance, if $(X,A)$ is any fibrewise cofibred pair
with $X$ Hausdorff, then necessarily $A$ is a closed subspace of
$X.$

 A \emph{fibrewise fibration} is a fibrewise map
$p:E\rightarrow Y$ such that it verifies Homotopy Lifting Property
with respect to any fibrewise space
$$\xymatrix{
{Z} \ar[r]^f \ar[d]_{i_0} & {E} \ar[d]^p \\
{I_B(Z)} \ar[r]_H \ar@{.>}[ur] & {Y} }$$

If $p:E\rightarrow Y$ is any fibrewise map such that it is a
Hurewicz fibration, then $p$ is a fibrewise fibration. However, in
general, the converse is not true. For instance, if $X$ is a
fibrewise space, then $p_X:X\rightarrow B$ is always a fibrewise
fibration, but $p_X$ need not be a Hurewicz fibration.

Denote by $fib_B,$ $\overline{cof}_B$ and $he_B$ the classes of
fibrewise fibrations, closed fibrewise cofibrations (equivalently,
closed fibrewise cofibred pairs) and fibrewise homotopy
equivalences, respectively. It is not hard to check that
$\mathbf{Top}_B$ is an $I$-category and a $P$-category in the
sense of Baues (see \cite[p.31]{B} for definitions). More is true,
one can also check that the Relative Homotopy Lifting Property
holds, i.e. if $(X,A)$ is any closed fibrewise cofibred pair and
$p:E\rightarrow Y$ any fibrewise fibration, then any commutative
diagram in $\mathbf{Top}_B$ of the form
$$\xymatrix{
{X\times \{0\}\cup A\times I} \ar[rr] \ar@{^(->}[d] & & {E} \ar@{>>}[d]^p \\
{I_B(X)} \ar[rr] & & {Y}}$$ \noindent admits a lift. All these
facts are summarized in the following proposition
\begin{proposition}\label{prev}
The category $\mathbf{Top}_B$ together with the classes of
$\overline{cof}_B,$ $fib_B$ and $he_B$ has an IP category
structure in the sense of Baues. In particular, $\mathbf{Top}_B$
is a cofibration category and a fibration category of Baues.
\end{proposition}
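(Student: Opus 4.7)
The plan is to verify Baues's axioms for an $I$-category, a $P$-category, their compatibility (giving the $IP$-structure), and finally the Relative Homotopy Lifting Property, essentially by transporting the classical Str{\o}m-type arguments from $\mathbf{Top}$ over to $\mathbf{Top}_B$. The point is that the cylinder $I_B(X)=X\times I$ and cocylinder $P_B(X)=B\times_{B^I}X^I$ are constructed precisely so that forgetting the projection $p_X$ recovers the usual constructions (up to the pullback), so the axioms can be pulled back from the known classical statements once one checks that the distinguished classes behave well with respect to the forgetful functor $\mathbf{Top}_B\to\mathbf{Top}$.

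First I would organise the verification of the $I$-category axioms. I would check (a) that $\overline{cof}_B$ is closed under composition and cobase change along arbitrary fibrewise maps (this is a pushout computation in $\mathbf{Top}_B$ which, because pushouts in $\mathbf{Top}_B$ are computed as in $\mathbf{Top}$, reduces to the classical closure properties of closed cofibrations), (b) the pushout axiom for $(I_B,\overline{cof}_B)$, which follows because $I_B(-)=-\times I$ preserves the relevant pushouts and $A\times I\cup X\times\{0,1\}\hookrightarrow X\times I$ is a closed cofibration whenever $(X,A)$ is, (c) the cofibration axiom $X\sqcup_B X\hookrightarrow I_B(X)$, a direct consequence of Proposition~\ref{strom} applied to the pair $(X\times I,X\times\{0,1\})$, (d) the relative cylinder axiom asserting that for a closed fibrewise cofibration $(X,A)$ the inclusion $I_B(A)\cup X\times\{0,1\}\hookrightarrow I_B(X)$ is itself a closed fibrewise cofibration, again by the Str{\o}m structure, and (e) the interchange axiom relating $i_0,i_1,\rho$ with pushouts. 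Dually, the $P$-category axioms are verified by a symmetric argument using the fibrewise cocylinder: closure of $fib_B$ under composition and base change (pullbacks in $\mathbf{Top}_B$ are computed in $\mathbf{Top}$), the pullback axiom for $(P_B,fib_B)$, the fibration axiom $P_B(X)\twoheadrightarrow X\times_B X$, and the relative cocylinder axiom.

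Next, for the $IP$-compatibility I would use the fact, already noted in the excerpt, that $(I_B,P_B)$ is an adjoint pair in the sense of Baues; this gives the natural bijection between fibrewise homotopies $I_B(X)\to Y$ and fibrewise maps $X\to P_B(Y)$ compatible with the $(i_0,i_1,\rho)$ and $(d_0,d_1,c)$ transformations, which is the only additional data needed to combine the $I$- and $P$-structures into an $IP$-structure. Finally, the Relative Homotopy Lifting Property is equivalent, via the adjunction, to the statement that for every closed fibrewise cofibration $(X,A)$ and every fibrewise fibration $p\colon E\twoheadrightarrow Y$ the map
$$E^I_B\longrightarrow Y^I_B\times_{Y^A_B}E^A_B$$
is a fibrewise (trivial) fibration in an appropriate sense; I would argue this by the standard pushout-product/pullback-power trick, using the Str{\o}m retraction from (d) to transfer a lifting problem against $i_0\colon X\to I_B(X)$ into one against $I_B(A)\cup X\times\{0\}\hookrightarrow I_B(X)$ and then invoking the $HLP$ for $p$.

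The main obstacle, and the place where the argument is not purely formal, is the Relative Homotopy Lifting Property in the last step: the classical proof in $\mathbf{Top}$ uses the explicit retraction $r\colon X\times I\to X\times\{0\}\cup A\times I$ coming from the Str{\o}m structure, and one has to check carefully that this retraction is in fact a \emph{fibrewise} map, so that the subsequent lifting produces a fibrewise homotopy and not merely a homotopy over no prescribed base. This is where Proposition~\ref{strom} is essential, because it provides a fibrewise Str{\o}m structure $(\varphi,H)$ with $H$ already a fibrewise homotopy; once this is in hand, the remaining verifications reduce to well-known manipulations in $\mathbf{Top}$. The fact that a cofibration category and a fibration category structure exist is then immediate from having verified the $I$- and $P$-category axioms separately, since Baues shows these respectively imply the corresponding Brown-type structures.
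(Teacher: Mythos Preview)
Your proposal is correct and follows the same approach as the paper: direct verification of Baues's $I$- and $P$-category axioms together with the Relative Homotopy Lifting Property, using the fibrewise Str{\o}m structure of Proposition~\ref{strom} as the key technical ingredient. The paper in fact gives no detailed proof of this proposition, contenting itself with the remark that the axioms are ``not hard to check'' and that the RHLP holds; your outline is therefore a faithful and more explicit expansion of exactly what the paper intends.
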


Following the reasonings given by Str{\o}m in \cite{S} we also
have the following theorem, which should be compared with \cite[Th
5.2.8]{M-S}.

\begin{theorem}\label{main}
The category $\mathbf{Top}_B$ together with the classes of
$\overline{cof}_B,$ $fib_B$ and $he_B$ has a proper closed model
category structure in the sense of Quillen.
\end{theorem}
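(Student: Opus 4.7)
The plan is to adapt Str{\o}m's classical construction of the Hurewicz-type model structure on $\mathbf{Top}$ from \cite{S} to the fibrewise setting, using the ingredients already assembled in the preceding subsection: the adjoint pair $(I_B,P_B)$ of cylinder/cocylinder functors, the Str{\o}m-structure characterization of Proposition \ref{strom}, and the IP-category machinery of Proposition \ref{prev}. Axiom MC1 is immediate from the identification $\mathbf{Top}_B\cong \mathbf{Top}/B,$ which inherits all small limits and colimits from $\mathbf{Top}.$ Axiom MC2 (two-out-of-three for $he_B$) is routine. Closure under retracts (MC3) is handled one class at a time: for $fib_B$ by a diagram chase from the HLP; for $\overline{cof}_B$ via the Str{\o}m structure; and for $he_B$ by the classical retract argument for homotopy equivalences.

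For the factorization axiom MC5 I would transfer the two standard Str{\o}m constructions to $\mathbf{Top}_B.$ The fibrewise mapping path space $W_B(f)=E\times _X P_B(X)$ factors any fibrewise map $f:E\rightarrow X$ as a trivial closed fibrewise cofibration followed by a fibrewise fibration, the cofibration part inheriting a Str{\o}m structure from the natural one on the pair $(P_B(X),c(X))$ furnished by the adjoint structure. The complementary factorization---closed fibrewise cofibration followed by a trivial fibrewise fibration---is obtained from the fibrewise mapping cylinder $Z_B(f)=X\cup _E I_B(E)$, composed with a further application of the path-space construction to promote the resulting fibrewise homotopy equivalence to a fibrewise fibration, exactly as in the classical argument.

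The lifting axiom MC4 is the technical heart of the theorem. Following Str{\o}m, both halves reduce to the claim that closed fibrewise cofibrations which are fibrewise homotopy equivalences are fibrewise strong deformation retract inclusions, and dually that trivial fibrewise fibrations admit fibrewise strong deformation retract sections. Granting these characterizations, the required lifts are built from the relative HLP of Proposition \ref{prev} together with a fibrewise Str{\o}m structure $(\varphi ,H)$ of the cofibration, the function $\varphi $ governing the interpolation so that the resulting map stays fibrewise over $B.$ Properness---both left and right---then follows formally from the gluing and cogluing lemmas available in any IP-category, guaranteed by Proposition \ref{prev}: pushouts of maps in $he_B$ along members of $\overline{cof}_B$ remain in $he_B,$ and dually for pullbacks along members of $fib_B.$

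The main obstacle I anticipate is the fibrewise deformation-retract characterization of trivial cofibrations and trivial fibrations that underlies MC4. One must verify that the sections, retractions, and deformations produced by Str{\o}m's original proof can be chosen fibrewise over $B.$ The adjoint pair $(I_B,P_B)$ is precisely designed for this purpose, but the explicit formulas---reparametrization and concatenation of lifted paths, and interpolation via maps into $I$---must be checked to commute with the projection $p_X$ at each step. Once this verification is in place, the remainder of the proof transports from \cite{S} essentially verbatim.
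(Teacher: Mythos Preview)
Your proposal is correct and follows essentially the same approach as the paper: the paper does not give a detailed proof at all but simply asserts that the theorem is obtained ``following the reasonings given by Str{\o}m in \cite{S}'' and refers the reader to \cite[Th 5.2.8]{M-S} for comparison. Your outline is precisely a fibrewise transcription of Str{\o}m's argument, so it matches the intended proof and in fact supplies more detail than the paper itself.
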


\subsection{Homotopy pushouts, pullbacks and joins in the fibrewise
setting.}

In this subsection we will establish the Cube Lemma in
$\mathbf{Top}_B.$ This result will be crucial in order to deal
with a Whitehead-Ganea type characterization for fibrewise
sectional category. First we need to present the notions of
\emph{fibrewise homotopy pullback} and \emph{fibrewise homotopy
pushout}, which are simply the corresponding homotopy limis in the
fibrewise axiomatic setting. Given a fibrewise homotopy
commutative diagram
$$\xymatrix@R=0.5cm@C=0.5cm{
  X \ar[dd]_{g} \ar[rr]^{f} & & Y \ar[dd]^{h} \\
  & & \ar@/_.8pc/[ld]_{H} \\
  Z \ar[rr]_{k} & & K   } $$

\vspace{-2.1cm}
\begin{flushright}
$(*)$
\end{flushright}
\vspace{1cm}

\noindent with a fibrewise homotopy $H:hf\simeq _B kg,$ there is
another homotopy commutative diagram
$$\xymatrix@R=0.5cm@C=0.5cm{
  E_{h,k} \ar[dd]_{q} \ar[rr]^{p} & & Y \ar[dd]^{h} \\
  & & \ar@/_.8pc/[ld]_{G} \\
  Z \ar[rr]_{k} & & K   }$$

\vspace{-2.1cm}
\begin{flushright}
$(**)$
\end{flushright}
\vspace{1cm}

\noindent in which $E_{h,k}=\{(y,(b,\theta ),z)\in Y\times
P_B(K)\times Z\hspace{3pt};\hspace{3pt}h(y)=\theta
(0)\hspace{3pt},\hspace{3pt}k(z)=\theta (1)\}$ with the natural
projection $E_{h,k}\rightarrow B,$ given by $(y,(b,\theta
),z)\mapsto b.$ Here $p$ and $q$ are the obvious restrictions of
the projections and $G$ is the fibrewise homotopy defined as
$G(y,(b,\theta ),z,t)=\theta (t).$ There is a \emph{fibrewise
whisker map } $w:X\rightarrow E_{h,k}$ given by
$w(x)=(f(x),(p_X(x),H(x,-)),g(x))$ satisfying $pw=f,$ $qw=g$ and
$G(w\times id)=H.$ The homotopy commutative square (*) is said to
be a fibrewise homotopy pullback whenever $w$ is a fibrewise
homotopy equivalence. Given $h$ and $k$ fibrewise maps there
always exist their fibrewise homotopy pullback. The square (**) is
called the \emph{standard fibrewise homotopy pullback}.

There is the dual notion in the sense of Eckmann-Hilton. Given
$f:X\rightarrow Y$ and $g:X\rightarrow Z$ fibrewise maps we can
consider the quotient fibrewise space $C_{f,g}:=(Y\sqcup
I_B(X)\sqcup Z)/\hspace{-3pt}\sim $ where $\sim $ is the
equivalent relation generated by the elemental relations
$(x,0)\sim f(x)$ and $(x,1)\sim g(x),$ for all $x\in X.$ The
induced fibrewise map $w':C_{f,g}\rightarrow K$ is called the
\emph{fibrewise co-whisker map}. If $w'$ is a fibrewise homotopy
equivalence then the square of $(*)$ is called \emph{fibrewise
homotopy pushout}.

\begin{remark}
As in the classical case, fibrewise homotopy pullbacks and
fibrewise homotopy pushouts can be also characterized by the weak
universal property of fibrewise homotopy limits and colimits or
through factorization properties. The reader is referred to
\cite{M}, \cite{D} or \cite{B-K}.
\end{remark}

A combination of fibrewise homotopy pullbacks and fibrewise
homotopy pushouts is the \emph{fibrewise join} of two fibrewise
maps $f:X\rightarrow Z$ and $g:Y\rightarrow Z$. Namely, the
fibrewise join of $f$ and $g,$ $X*_Z Y,$ is the fibrewise homotopy
pushout of the fibrewise homotopy pullback of $f$ and $g,$
$$\xymatrix@C=0.7cm@R=0.7cm{ {\bullet } \ar[rr] \ar[dd] & & {Y} \ar[dl] \ar[dd]^g \\
 & {X*_Z Y} \ar@{.>}[dr] & \\ {X} \ar[ur] \ar[rr]_f & & {Z} }$$
being the dotted arrow the corresponding co-whisker map, induced
by the weak universal property of fibrewise homotopy pushouts.

The next result relates fibrewise cofibrations and fibrewise
fibrations and is one of the key tools in the proof of the Cube
Lemma.

\begin{lemma}\label{crucial}
Consider a fibrewise pullback of the following form, where $p$ is
a fibrewise fibration over $B$
$$\xymatrix{
{P} \ar[rr]^{j'} \ar[d]_{p'} & & {E} \ar[d]^p \\
{A} \ar[rr]_{j} & & {X} }$$ If $j:A\rightarrow X$ is a closed
fibrewise cofibration, then so is its base change $j':P\rightarrow
E.$
\end{lemma}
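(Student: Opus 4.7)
The plan is to construct a fibrewise Str\o m structure on $(E,P)$ and then invoke the reverse direction of Proposition~\ref{strom}. The closedness of $j'$ comes for free: since $j$ is a closed embedding, so is its base change $j'$, and we may identify $P$ with the closed subspace $p^{-1}(A)\subseteq E$.

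Let $(\varphi,H)$ be a fibrewise Str\o m structure on $(X,A)$ with $A=\varphi^{-1}(\{0\})$, as furnished by Proposition~\ref{strom}. Set $\varphi':=\varphi\circ p:E\to I$; then $(\varphi')^{-1}(\{0\})=p^{-1}(A)=P$, which secures axiom~(i). For the fibrewise homotopy $H'$, consider the fibrewise map $\bar H:I_B(E)\to X$ defined by $\bar H(e,t):=H(p(e),t)$; it begins at $p$, so the fact that $p$ is a fibrewise fibration (HLP) produces a fibrewise lift $H':I_B(E)\to E$ with $H'(e,0)=e$ and $p\circ H'=\bar H$. This $H'$ automatically verifies axiom~(iii), because $t>\varphi'(e)=\varphi(p(e))$ forces $\bar H(e,t)\in A$ and hence $H'(e,t)\in p^{-1}(A)=P$.

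The main obstacle is axiom~(ii), namely that $H'$ must be the identity on $P\times I$. On this subspace the target $\bar H$ is already the constant homotopy $(e,t)\mapsto p(e)$, yet a generic lift only lives in the fibre $p^{-1}(p(e))$ and may drift within it. To repair this, one invokes the classical Str\o m trick: from $(\varphi,H)$ one manufactures a strong deformation retraction $r:I_B(X)\to X\times\{0\}\cup A\times I$ of the cylinder onto its subspace; pulling $r$ back along $p\times\mathrm{id}_I$ and lifting the result through $p$ starting from the canonical partial lift on $E\times\{0\}\cup P\times I$ (the identity on the first piece and the first projection on the second, which are compatible because $\bar H$ is constant on $P\times I$) via a further application of the HLP produces the desired $H'$, which is the identity on $P\times I$ while still lifting $\bar H$.

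With $(\varphi',H')$ in hand, a mechanical verification of the three axioms is routine and Proposition~\ref{strom} delivers that $(E,P)$ is a closed fibrewise cofibred pair, i.e.\ $j':P\to E$ is a closed fibrewise cofibration.
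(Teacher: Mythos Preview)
Your diagnosis of the difficulty is exactly right: after lifting $\bar H=H\circ(p\times\mathrm{id})$ through $p$ via the ordinary HLP, the resulting $H'$ need only satisfy $p\circ H'=\bar H$ and $H'(-,0)=\mathrm{id}_E$, so on $P\times I$ it may wander inside the fibre. The gap is in your repair. You propose to extend the ``canonical partial lift'' defined on $E\times\{0\}\cup P\times I$ to all of $E\times I$ by ``a further application of the HLP''. But the HLP for the fibrewise fibration $p$ only provides extensions of lifts from $Z\times\{0\}$; extending a lift from $Z\times\{0\}\cup A'\times I$ is the \emph{Relative} Homotopy Lifting Property, and (as the paper records just before Proposition~\ref{prev}) that requires $(Z,A')$ to be a closed fibrewise cofibred pair. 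Here $(Z,A')=(E,P)$, which is precisely the pair whose cofibredness you are trying to establish, so the argument is circular. Passing through the Str{\o}m retraction $r:I_B(X)\to X\times\{0\}\cup A\times I$ does not help: composing with $p\times\mathrm{id}$ and projecting to $X$ still leaves you needing a lift through $p$ that is prescribed on $E\times\{0\}\cup P\times I$, and no amount of pulling back manufactures that relative lifting property.

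The paper sidesteps this entirely with a one-line reparametrisation. Take the naive lift $\overline{H}:I_B(E)\to E$ of $H\circ(p\times\mathrm{id})$ starting at $\mathrm{id}_E$, obtained from the ordinary HLP, and set
\[
\widetilde{H}(e,t)\;=\;\overline{H}\bigl(e,\ \min\{t,\varphi(p(e))\}\bigr).
\]
For $e\in P$ one has $\varphi(p(e))=0$, hence $\widetilde{H}(e,t)=\overline{H}(e,0)=e$, so axiom~(ii) is automatic. For $t>\varphi(p(e))$ one gets $p(\widetilde{H}(e,t))=H(p(e),\varphi(p(e)))\in A$ (using closedness of $A$ to pass to the limit $t\downarrow\varphi(p(e))$), so $\widetilde{H}(e,t)\in P$. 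Thus $(\varphi\circ p,\widetilde{H})$ is the desired fibrewise Str{\o}m structure on $(E,P)$, with no relative lifting needed.
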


\begin{proof}
We can suppose that $(X,A)$ is a closed fibrewise cofibred pair,
and $j$ the natural inclusion, so that there exists $(\varphi ,H)$
a fibrewise Str{\o}m structure. Moreover, $P=A\times _X E=p^{-1}(A)$
with $j'$ the inclusion and $p'$ the corresponding restriction of
$p.$ Now take a lift in the diagram
$$\xymatrix{ {E} \ar[rr]^{id} \ar[d]_{i_0} & & {E} \ar[d]^p \\
{I_B(E)} \ar[rr]_{H(p\times id)} \ar@{.>}[urr]_{\overline{H}} & &
{X} }$$ Defining $\widetilde{H}(e,t)=\overline{H}(e,\min
\{t,\varphi p(e)\})$, then one can easily check that
$(\widetilde{H},\varphi p)$ is a fibrewise Str{\o}m structure for the
pair defined by the inclusion $j'.$ \end{proof}

\begin{theorem}[Cube Lemma]\label{cube} Given a fibrewise homotopy commutative cube
$$\xymatrix@!0{
  & {W } \ar[dl] \ar[rr] \ar'[d][dd]
      &  & {X } \ar[dd] \ar[dl]       \\
  {Y} \ar[rr]\ar[dd]
      &  & {Z} \ar[dd] \\
  & {A} \ar[dl] \ar'[r][rr]
      &  & {B}   \ar[dl]             \\
  {C} \ar[rr]
      &  & {D}         }$$
in which the bottom face is a fibrewise homotopy pushout and all
sides are fibrewise homotopy pullbacks, then the top face is also
a fibrewise homotopy pushout.
\end{theorem}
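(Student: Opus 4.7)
The plan is to reduce the cube to a strict form by replacing objects and maps along fibrewise homotopy equivalences, so that the four side faces become strict pullbacks and the bottom face becomes a strict pushout along a closed fibrewise cofibration. Once this is achieved, the top face can be analyzed directly using Lemma \ref{crucial} together with a Str{\o}m-structure argument, and it will automatically be a strict pushout along a closed fibrewise cofibration, hence a fibrewise homotopy pushout.

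First, working inside the closed model category structure supplied by Theorem \ref{main}, I would arrange that in the bottom face $A\to B$ is a closed fibrewise cofibration and that $D$ is the strict pushout of $B\leftarrow A\to C$, so that the bottom becomes automatically a fibrewise homotopy pushout. Next, I would factor the right vertical map $Z\to D$ as a fibrewise homotopy equivalence followed by a fibrewise fibration $Z'\twoheadrightarrow D$, and define $W'$, $X'$, $Y'$ as the strict fibrewise pullbacks of $Z'\twoheadrightarrow D$ along $A\to D$, $B\to D$ and $C\to D$ respectively. The homotopy-pullback hypothesis on the original side faces then guarantees that $W',X',Y',Z'$ are fibrewise homotopy equivalent to the original $W,X,Y,Z$ in a way compatible with all structure maps, so it suffices to prove the conclusion for the rigid cube.

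Now I would apply Lemma \ref{crucial} to each side face: since $A\hookrightarrow B$, $A\hookrightarrow C$ and $C\hookrightarrow D$ are closed fibrewise cofibrations and $Z'\twoheadrightarrow D$ is a fibrewise fibration, the induced inclusions $W'\hookrightarrow X'$, $W'\hookrightarrow Y'$ and $Y'\hookrightarrow Z'$ are closed fibrewise cofibrations. Set-theoretically one has $(p')^{-1}(B)\cup (p')^{-1}(C)=(p')^{-1}(B\cup_A C)=Z'$ and $(p')^{-1}(B)\cap (p')^{-1}(C)=(p')^{-1}(A)=W'$, so $Z'=X'\cup_{W'} Y'$ as a set. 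A fibrewise Str{\o}m structure for the pair $(Y',W')$ provided by Proposition \ref{strom}, patched with the identity on $X'$, yields a fibrewise deformation witnessing that the canonical map from the strict pushout $X'\sqcup_{W'} Y'$ to $Z'$ is a fibrewise homeomorphism. Consequently the top face is a strict fibrewise pushout along a closed fibrewise cofibration, and therefore a fibrewise homotopy pushout.

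The main obstacle I anticipate is the bookkeeping in the reduction step: one has to ensure that the successive replacements of vertices and edges of the cube, together with the coherent homotopies on its side faces, do assemble into a genuine fibrewise homotopy equivalence between the two cubical diagrams, so that being a fibrewise homotopy pushout really transfers from the rigid top square back to the original one. Once this coherence is under control, the remainder of the argument is a direct application of Lemma \ref{crucial} and of the Str{\o}m-structure theory developed in Subsection 1.1.
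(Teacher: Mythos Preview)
Your proposal follows essentially the same strategy as the paper's proof: reduce (\`a la Mather) to a strict cube in which the bottom is a genuine pushout along closed fibrewise cofibrations and $Z\to D$ is a fibrewise fibration, apply Lemma~\ref{crucial} to the side faces, and then check that the induced map from the strict pushout of the top to $Z$ is an isomorphism. Two minor points to tighten: you should also arrange $A\to C$ (not only $A\to B$) to be a closed fibrewise cofibration, since you use this later; and the final identification $X'\sqcup_{W'}Y'\cong Z'$ is a straightforward point-set check (both $X'$ and $Y'$ are closed in $Z'$ and cover it) rather than a Str{\o}m-structure/deformation argument---the paper simply refers to \cite{D2} for this.
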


\begin{proof}
Following an analogous reasoning to the one given in \cite{M} we
can suppose without loss of generality that the cube is strictly
commutative in which:
\begin{itemize}
\item The arrows $A\rightarrow B$ and $A\rightarrow C$ are closed fibrewise
cofibrations, and the bottom face is a fibrewise pushout;
\item $Z\rightarrow D$ is a fibrewise fibration and all side squares are fibrewise pullbacks.
\end{itemize}
This way the top square is of the form
$$\xymatrix{
{A\times _DZ} \ar[rr] \ar[d] & & {B\times _DZ} \ar[d] \\
{C\times _DZ} \ar[rr] & & {Z} }$$ \noindent where, by Lemma
\ref{crucial} above, all the arrows are closed fibrewise
cofibrations. Now consider $P$ the fibrewise pushout of $C\times
_DZ{\longleftarrow }A\times _DZ{\longrightarrow }B\times _DZ$ and
$\theta :P\rightarrow Z$ the fibrewise map induced by the pushout
property. A simple inspection proves that $\theta $ is a fibrewise
isomorphism (compare with \cite[6.1]{D2} ) concluding that this
square is a fibrewise (homotopy) pushout. \end{proof}

\begin{remark}
Note that, even satisfying the Cube Lemma, $\mathbf{Top}_B$ is not
a $J$-category in the sense of Doeraene \cite{D} as it has no zero
object.
\end{remark}

\section{Fibrewise sectional category.}

And now we will establish the main notion of the paper in the
fibrewise context, the one of fibrewise sectional category. First
we give some background on fibrewise LS category and its unpointed
version.

Let $X$ be a fibrewise space. The \emph{fibrewise L.-S. category}
of $X$, $\mbox{cat}_B(X),$ is the minimal number $n\geq 0$ such
that there exists a cover $\{U_i\}_{i=0}^n$ of $X$ by $n+1$ open
subsets, each of them admitting a fibrewise homotopy commutative
diagram in $\mathbf{Top}_B$ of the form
$$\xymatrix{
{U_i} \ar@{^(->}[rr] \ar[dr]_{p_X|U_i} & & {X} \\
 & {B} \ar[ur]_{s_i} & }$$
If there is no such $n$, then we say $\mbox{cat}_B(X)=\infty .$
This notion was given by James-Morris in \cite{J-M} (see also
\cite{J2} and \cite{C-J}). Unfortunately, this invariant is not
very manageable from the axiomatic point of view. Instead we will
consider a certain variant of $\mbox{cat}_B(-),$ given by
Iwase-Sakai in \cite{I-S}. For this, we deal with fibrewise
pointed spaces. By a \emph{fibrewise pointed space over $B$} we
mean a fibrewise space $X$ together with a fibrewise map
$s_X:B\rightarrow X$ (i.e., $s_X:B\rightarrow X$ is a section of
$p_X$).

If $X$ is a fibrewise pointed space, then the \emph{fibrewise
unpointed L.-S. category} of $X$, $\mbox{cat}_B^*(X),$ is the
minimal number $n\geq 0$ such that there exists a cover
$\{U_i\}_{i=0}^n$ of $X$ by $n+1$ open subsets, each of them
\emph{fibrewise categorical}, that is, admitting a fibrewise
homotopy commutative diagram in $\mathbf{Top}_B$ of the form
$$\xymatrix{
{U_i} \ar@{^(->}[rr] \ar[dr]_{p_X|U_i} & & {X} \\
 & {B} \ar[ur]_{s_X} & }$$
If there is no such $n$, then we say $\mbox{cat}_B^*(X)=\infty .$

Obviously, $\mbox{cat}_B(X)\leq \mbox{cat}_B^*(X)$ and the
equality holds when $X$ is \emph{vertically connected}, that is,
all possible sections $s_i:B\rightarrow X$ are fibrewise homotopic
to $s_X$ (see \cite{J-M}). As James-Morris assert, when $B$ is a
CW-complex a fibre bundle over $B$ with fibre $F$ is vertically
connected if $\mbox{dim}(B)$ does not exceed the connectivity of
$F.$

\subsection{Open-like definition of fibrewise sectional category}

We want to give the natural generalization of fibrewise unpointed
LS category by considering the analogous notion of sectional
category in the fibrewise setting.

\begin{definition}{\rm
Let $f:E\rightarrow X$ be a fibrewise map over $B$ and consider an
open subset $U$ of $X.$ Then $U$ is said to be \emph{fibrewise
sectional} if there exists a morphism $s:U\rightarrow E$ in
$\mathbf{Top}_B$ such that the following triangle commutes up to
fibrewise homotopy
$$\xymatrix{ {U} \ar@{^(->}[rr]^{in} \ar[dr]_s & & {X} \\
 & {E} \ar[ur]_f & }$$ The \emph{fibrewise sectional category} of $f,$
denoted as $\mbox{secat}_B(f)$ is the minimal number $n$ such that
$X$ admits a cover $\{U_i\}_{i=0}^n$ constituted by fibrewise
sectional open subsets. If there is no such $n$, then
$\mbox{secat}_B(f)=\infty .$}
\end{definition}

When $f:E\rightarrow X$ is a fibrewise fibration, then we may
suppose in the definition of fibrewise sectional that the
triangles are strictly commutative. On the other hand, from the
definition of fibrewise sectional category it is clear the
identity
$$\mbox{cat}^*_B(X)=\mbox{secat}_B(s _X).$$ We can generalize this
fact. Observe that given a fibrewise homotopy commutative diagram
in $\mathbf{Top}_B$ of the form
$$\xymatrix{ {E} \ar[rr]^{\lambda } \ar[dr]_f & & {E'} \ar[dl]^{f'} \\
 & {X} & }$$ \noindent one has the inequality $\mbox{secat}_B(f')\leq \mbox{secat}_B(f).$
Indeed, if $U$ is an open fibrewise sectional categorical subset
of $X$ with local fibrewise homotopy section $s:U\rightarrow E$ of
$f,$ then $s'=\lambda s:U\rightarrow E'$ is a local fibrewise
homotopy section of $f'.$

A \emph{fibrewise contractible space} (or just a \emph{shrinkable}
space) is any fibrewise space having the fibrewise homotopy type
of $B.$ On the other hand if $X$ and $Y$ are fibrewise pointed
spaces over $B$, by a \emph{fibrewise pointed map} $f:X\rightarrow
Y$ we mean a fibrewise map such that $fs_X=s_Y.$ Applying the
above comments to the commutative triangle $fs_E=s_X$ we obtain

\begin{proposition}
Let $f:E\rightarrow X$ be any fibrewise pointed map. Then
$$\mbox{secat}_B(f)\leq \mbox{cat}_B^*(X)$$
If $E$ is fibrewise contractible, then
$\mbox{secat}_B(f)=\mbox{cat}_B^*(X).$
\end{proposition}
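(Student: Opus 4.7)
The plan is to deduce both assertions from two facts already recorded in the paragraphs immediately preceding the statement: first, the identity $\mbox{cat}^*_B(X)=\mbox{secat}_B(s_X)$; and second, the monotonicity principle that whenever $f'\lambda \simeq_B f$ one has $\mbox{secat}_B(f')\leq \mbox{secat}_B(f)$ (which was justified by pushing any local section $s:U\to E$ of $f$ through $\lambda$).

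For the first inequality, the pointedness of $f$ gives exactly the fibrewise commutative triangle $s_X = f\circ s_E$. I would apply the monotonicity principle with $\lambda = s_E$ and $f'=f$ to the map playing the role of the composite, namely $s_X$. This immediately yields $\mbox{secat}_B(f)\leq \mbox{secat}_B(s_X) = \mbox{cat}^*_B(X)$.

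For the reverse inequality under the assumption that $E$ is fibrewise contractible, I would first observe that the section $s_E:B\to E$ is in fact a fibrewise homotopy equivalence. Indeed, by hypothesis $p_E:E\to B$ is a fibrewise homotopy equivalence, and since $p_E\circ s_E = \mbox{id}_B$ is also such, the two-out-of-three property (valid in $\mathbf{Top}_B$, as it is a model category by Theorem \ref{main}) forces $s_E$ to be a fibrewise homotopy equivalence, with $s_E\circ p_E \simeq_B \mbox{id}_E$. Composing with $f$ on the left gives $s_X\circ p_E = f\circ s_E\circ p_E \simeq_B f$. Now I would apply the monotonicity principle again, this time with $\lambda = p_E$ and $f' = s_X$, to obtain $\mbox{cat}^*_B(X) = \mbox{secat}_B(s_X)\leq \mbox{secat}_B(f)$. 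Combined with the first inequality, this gives the desired equality.

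There is no genuinely hard step here; the only point to verify carefully is the two-out-of-three argument that upgrades the abstract equivalence $E\simeq_B B$ to the statement that the \emph{given} section $s_E$ is a fibrewise homotopy inverse of $p_E$, which is what makes the second monotonicity application possible.
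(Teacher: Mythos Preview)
Your proof is correct and follows exactly the approach the paper intends: the paper's entire argument is the single sentence ``Applying the above comments to the commutative triangle $fs_E=s_X$ we obtain'', where the ``above comments'' are precisely the identity $\mbox{cat}^*_B(X)=\mbox{secat}_B(s_X)$ and the monotonicity principle you cite. Your write-up simply makes explicit the step the paper leaves to the reader for the reverse inequality, namely that fibrewise contractibility of $E$ forces $s_E$ to be a fibrewise homotopy equivalence (so that the triangle can be read in the other direction via $s_X p_E \simeq_B f$); this is exactly the right way to unpack the paper's terse remark.
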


One can also check that the fibrewise unpointed LS category
$\mbox{cat}^*_B(-)$ is invariant by fibrewise pointed maps which
are fibrewise homotopy equivalences.

\begin{proposition}\label{invariance}
If $f:X\rightarrow Y$ is a fibrewise pointed map such that it is a
fibrewise homotopy equivalence, then
$$\mbox{cat}^*_B(X)=\mbox{cat}^*_B(Y).$$
In particular, if $X$ is a fibrewise pointed space over $B,$ then
$\mbox{cat}^*_B(X)=0$ if and only if $X$ is fibrewise
contractible.
\end{proposition}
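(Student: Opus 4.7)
The plan is to reduce the statement to a concrete pointwise condition and then transport categorical covers across $f$ and its fibrewise homotopy inverse. Unravelling the diagram in the definition of fibrewise categorical, an open $U\subseteq X$ is fibrewise categorical precisely when $s_X\circ p_X|_U\simeq _B in_U$, since any fibrewise map $U\rightarrow B$ must equal the projection $p_X|_U$. Thus the equality $\mbox{cat}_B^*(X)=\mbox{cat}_B^*(Y)$ amounts to showing that covers satisfying this identity on one side can be transferred to covers satisfying it on the other.

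Fix a fibrewise homotopy inverse $g:Y\rightarrow X$ of $f$. The map $g$ is not assumed pointed, but the computation $gs_Y=gfs_X\simeq _Bs_X$ (together with the symmetric situation $fg\simeq _B\mathrm{id}_Y$ and the exact equality $fs_X=s_Y$) shows that pointedness is preserved up to fibrewise homotopy; this is the crucial input. To prove $\mbox{cat}_B^*(X)\leq \mbox{cat}_B^*(Y)$, I would take a fibrewise categorical cover $\{V_i\}_{i=0}^n$ of $Y$ and set $U_i=f^{-1}(V_i)$. These are open and cover $X$, and since $f$ is fibrewise one has $p_X=p_Yf$, giving
$$s_Xp_X|_{U_i}\simeq _B gs_Yp_Yf|_{U_i}\simeq _B g\circ in_{V_i}\circ f|_{U_i}=(gf)|_{U_i}\simeq _B in_{U_i},$$
whose three homotopies respectively invoke $s_X\simeq _Bgs_Y$, the hypothesis $s_Yp_Y|_{V_i}\simeq _B in_{V_i}$ (post-composed with $g$ and pre-composed with $f|_{U_i}$), and $gf\simeq _B\mathrm{id}_X$. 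The reverse inequality is the mirror argument: set $V_i=g^{-1}(U_i)$, compose the hypothesis on $U_i$ with $g|_{V_i}$ (using $p_Yg=p_X$) and then apply $f$, exploiting $fs_X=s_Y$ and $fg\simeq _B\mathrm{id}_Y$.

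For the second assertion, $\mbox{cat}_B^*(X)=0$ means that $X$ itself is fibrewise categorical, so $s_Xp_X\simeq _B\mathrm{id}_X$; together with $p_Xs_X=\mathrm{id}_B$ this exhibits $s_X$ as a fibrewise homotopy equivalence, hence $X$ is fibrewise contractible. Conversely, if $X$ is fibrewise contractible then any equivalence $\phi :X\rightarrow B$ is forced to be $p_X$ (being a fibrewise map into $B$), so some $\psi :B\rightarrow X$ satisfies $\psi p_X\simeq _B\mathrm{id}_X$ and $p_X\psi \simeq _B\mathrm{id}_B$. The short computation $\psi =\psi (p_Xs_X)=(\psi p_X)s_X\simeq _B s_X$ then yields $s_Xp_X\simeq _B\psi p_X\simeq _B\mathrm{id}_X$, as required. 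I expect the only point needing real care to be precisely this last step: an \emph{arbitrary} fibrewise contraction of $X$ must be shown to be replaceable by one realised through the canonical section $s_X$. The rest of the argument is a routine bookkeeping of fibrewise homotopies, made possible entirely by the preparatory identity $gs_Y\simeq _Bs_X$.
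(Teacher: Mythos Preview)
Your argument is correct. The paper actually omits the proof of this proposition entirely (it is prefaced only by the phrase ``One can also check that\ldots''), so there is nothing to compare your approach against; your write-up supplies a clean elementary verification. The key observation you isolate---that although the homotopy inverse $g$ need not be pointed, the relation $gs_Y=gfs_X\simeq_B s_X$ holds---is exactly what makes the pull-back of categorical covers along $f$ work, and the mirror direction is even easier because $fs_X=s_Y$ strictly. Your treatment of the ``in particular'' clause, reducing an arbitrary fibrewise contraction to one through $s_X$ via $\psi=\psi p_Xs_X\simeq_Bs_X$, is likewise the standard (and correct) manoeuvre.
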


\subsection{The axiomatic approach of fibrewise sectional category}

Now we study the fibrewise sectional category $\mbox{secat}_B(-)$
from a Whitehead-Ganea approach. Let $f:E\rightarrow X$ be a
fibrewise map. For each $n\geq 0$ we consider the \emph{fibrewise
sectional n-fat wedge} as the fibrewise map
$j_n:T_B^n(f)\rightarrow \prod _B^{n+1}X$ inductively defined as
follows: set $T^0_B(f)=E$, $j_0=f:E\rightarrow X$, and define
$j_{n}:T^{n}_B(f)\rightarrow \prod _B^{n+1}X$ as the join in
$\mathbf{Top}_B$
$$\xymatrix@C=0.5cm@R=0.6cm{ {\bullet } \ar[rr] \ar[dd] & & {X\times _BT^{n-1}_B(f)}
\ar[dl] \ar[dd]^{id_X \times _Bj_{n-1}} \\
 & {T^n_B(f)} \ar@{.>}[dr]^{j_n} & \\ {B\times _B\prod _B^{n}X} \ar[ur] \ar[rr]_{f \times _Bid} & &
{\prod _B^{n+1}X} }$$

On the other hand, the $n$-th \emph{fibrewise Ganea map} of $X,$
$G^n_B(X)\stackrel{p_n}{\longrightarrow }X,$ is inductively
defined as follows:

Set $p_0:=f:E\rightarrow X$ (so $G^0_B(f)=E$).  If $p_{n-1}$ is
already constructed, then $G^{n}_B(f)$ is the fibrewise join of
$G^{n-1}_B(f)\stackrel{p_{n-1}}{\longrightarrow }X\stackrel{f
}{\longleftarrow }E$, and $p_n$ is the induced whisker map:
$$\xymatrix@C=0.5cm@R=0.6cm{ {\bullet } \ar[rr] \ar[dd] & & {E}
\ar[dl] \ar[dd]^{f} \\
 & {G^{n}_B(f)} \ar@{.>}[dr]^{p_n} & \\ {G^{n-1}_B(f)} \ar[ur] \ar[rr]_{p_{n-1}} & & {X} }$$

The following result is a direct consequence of Theorem
\ref{cube}, the Cube Lemma in the fibrewise context. Its proof is
similar to the classical case (see, for instance, \cite{C-L-O-T})
and therefore is omitted and left to the reader.

\begin{lemma}\label{WG}
Let $f:E\rightarrow X$ be a fibrewise map. Then, for each $n\geq
0,$ there is a fibrewise homotopy pullback
$$\xymatrix{
  {G^n_B(f)} \ar[d]_{p_n} \ar[r]
                & {T^n_B(f)} \ar[d]^{j_n}  \\
  {X}  \ar[r]_{\Delta _{n+1}}
                & {\prod _B^{n+1}X}             }$$
\noindent where $\Delta _{n+1}$ denotes the $n+1$-st diagonal map.
\end{lemma}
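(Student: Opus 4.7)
The plan is induction on $n$. For the base case $n=0$ the square reduces to
$$\xymatrix{E\ar@{=}[r]\ar[d]_f & E\ar[d]^f\\ X\ar@{=}[r] & X}$$
which is trivially a strict pullback and therefore a fibrewise homotopy pullback.

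For the inductive step, assume the statement at level $n-1$. Both $j_n$ and $p_n$ are by construction fibrewise joins, that is, fibrewise homotopy pushouts of fibrewise homotopy pullbacks, so the natural strategy is to build a cube whose bottom face is the defining pushout square of $T^n_B(f)$ over $\prod_B^{n+1}X$, whose vertical sides are fibrewise homotopy pullbacks along $\Delta_{n+1}:X\rightarrow\prod_B^{n+1}X$, and whose top face will then be a fibrewise homotopy pushout by Theorem \ref{cube}.

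Writing $\Delta_{n+1}=(id_X,\Delta_n):X\rightarrow X\times_B\prod_B^nX$, one pulls back each corner of the join that defines $j_n$ along $\Delta_{n+1}$. The inductive hypothesis identifies the pullback of $X\times_BT^{n-1}_B(f)\rightarrow\prod_B^{n+1}X$ along $\Delta_{n+1}$ with $G^{n-1}_B(f)$, and the induced map to $X$ with $p_{n-1}$; the other join-leg pulls back to $E$ with map $f$; and the inner pullback vertex pulls back to the fibrewise homotopy pullback of $p_{n-1}$ and $f$ over $X$. The Cube Lemma then guarantees that the top face of the cube is a fibrewise homotopy pushout. But this top face is precisely the fibrewise join that defines $p_n:G^n_B(f)\rightarrow X$, so $G^n_B(f)$ is the fibrewise homotopy pullback of $j_n$ along $\Delta_{n+1}$, which is the assertion of the lemma.

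The principal technical obstacle is setting up this cube strictly commutatively and with the cofibration/fibration hypotheses that underlie Theorem \ref{cube}. This is arranged using the Str{\o}m-type model structure of Theorem \ref{main}: one replaces the two join components by equivalent maps so that one leg is a fibrewise fibration while the other begins with a closed fibrewise cofibration, and Lemma \ref{crucial} then ensures that base change along $\Delta_{n+1}$ preserves the closed fibrewise cofibration property, so the side squares of the cube really are pullbacks along a fibration with all required maps of the correct type. With these strict replacements in place the Cube Lemma applies verbatim and the inductive step goes through as described, formally paralleling the classical Whitehead--Ganea argument.
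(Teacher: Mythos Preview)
Your argument is correct and follows exactly the route the paper indicates: the paper omits the proof, stating only that it is a direct consequence of the Cube Lemma (Theorem~\ref{cube}) and parallels the classical Whitehead--Ganea argument in \cite{C-L-O-T}, and your inductive cube argument --- pulling back the defining join square of $T^n_B(f)$ along $\Delta_{n+1}$, identifying the corners via the inductive hypothesis and pullback pasting, and invoking Theorem~\ref{cube} to recognise the top face as the join defining $G^n_B(f)$ --- is precisely that argument carried out in the fibrewise setting. Your remarks on strictifying the cube via the model structure of Theorem~\ref{main} and Lemma~\ref{crucial} are also in line with how the paper sets up and proves the Cube Lemma itself.
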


\begin{lemma}\label{fat-cof}
Let $f:E\rightarrow X$ be a closed fibrewise cofibration. Then,
the fibrewise sectional n-fat wedge is, up to fibrewise homotopy,
\begin{center}$T^n_B(f)=\{(x_0,x_1,...,x_n)\in \prod
_B^{n+1}X\hspace{3pt};\hspace{3pt}x_i\in E \hspace{4pt}\mbox{for
some}\hspace{4pt}i\in \{0,1,...,n\}\}$\end{center} \noindent being
$j_n:T^n_B(f)\hookrightarrow \prod _B^{n+1}X$ the canonical
inclusion.
\end{lemma}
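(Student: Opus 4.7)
The plan is to proceed by induction on $n$, using the inductive definition of $j_n$ as a fibrewise join and the fact that, under the cofibration hypothesis on $f$, all the relevant maps into $\prod_B^{n+1}X$ can be taken to be closed fibrewise cofibrations. This will allow strict pullbacks and pushouts in $\mathbf{Top}_B$ to compute the corresponding fibrewise homotopy limits and colimits in the Str{\o}m-type model structure of Theorem \ref{main}. Throughout, write $W_n := \{(x_0,\dots,x_n)\in \prod_B^{n+1}X : x_i\in E \text{ for some } i\}$ and $\iota_n : W_n\hookrightarrow \prod_B^{n+1}X$ for the candidate fat wedge.

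The base case $n=0$ is immediate: $T^0_B(f)=E$ and $j_0=f$, which, being a closed fibrewise cofibration, is a closed embedding and matches $\iota_0 : W_0\hookrightarrow X$. For the inductive step, I assume $T^{n-1}_B(f)\simeq_B W_{n-1}$ with $j_{n-1}$ equivalent over $\prod_B^n X$ to $\iota_{n-1}$. A first sub-step is to verify, by a secondary induction, that $\iota_{n-1}$ itself is a closed fibrewise cofibration: one decomposes $W_{n-1} = (E\times_B \prod_B^{n-1}X)\cup (X\times_B W_{n-2})$ and observes that each summand inherits a fibrewise Str{\o}m structure (Proposition \ref{strom}) from $f$ and $\iota_{n-2}$ by crossing with identities; these two structures can then be glued, via pointwise minimum of the maps to $I$ and an interleaving of the two homotopies, into a Str{\o}m structure for the union inside $\prod_B^n X$.

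Under these identifications, the defining join for $j_n$ becomes the fibrewise join in $\prod_B^{n+1}X$ of the two closed fibrewise cofibrations $\alpha := f\times_B \mathrm{id}:E\times_B \prod_B^n X\hookrightarrow \prod_B^{n+1}X$ and $\beta := \mathrm{id}_X\times_B \iota_{n-1}: X\times_B W_{n-1}\hookrightarrow \prod_B^{n+1}X$. Their strict pullback is the intersection $E\times_B W_{n-1}$, while the strict pushout of the span $E\times_B \prod_B^n X \hookleftarrow E\times_B W_{n-1}\hookrightarrow X\times_B W_{n-1}$ is precisely $W_n$, with the induced map being the inclusion $\iota_n$. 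Both arrows of the span are closed fibrewise cofibrations (base changes of $\iota_{n-1}$ and of $f$ along product projections), so this strict pushout is automatically a fibrewise homotopy pushout. Provided the strict pullback realises the fibrewise homotopy pullback, this identifies the join with $\iota_n : W_n\hookrightarrow \prod_B^{n+1}X$, as required.

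The main obstacle is exactly that last point: showing $E\times_B W_{n-1}$ is the fibrewise homotopy pullback of $\alpha$ and $\beta$, and not merely the strict one, since neither map is a fibrewise fibration so Lemma \ref{crucial} does not apply directly. My plan is to factor $\beta$ through a fibrewise equivalent fibration $\widetilde{\beta}$ using the factorization axiom of Theorem \ref{main} (equivalently, via the fibrewise cocylinder $P_B$), form the strict pullback of $\widetilde{\beta}$ along $\alpha$, and then construct an explicit fibrewise strong deformation retraction from this pullback onto $E\times_B W_{n-1}$, built from the fibrewise Str{\o}m structure of $(\prod_B^{n+1}X, X\times_B W_{n-1})$ produced in the previous step. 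This is the part of the argument in which the closed cofibration hypothesis on $f$ is essentially used, and it mirrors the corresponding non-fibrewise step in the classical proof (compare \cite{C-L-O-T}).
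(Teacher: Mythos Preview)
Your inductive strategy matches the paper's, but the execution of the two key steps differs.

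For the pushout, you do more work than necessary: the paper observes only that $f\times_B\mathrm{id}:E\times_B T^{n-1}_B(f)\to X\times_B T^{n-1}_B(f)$ is a closed fibrewise cofibration (immediate from the hypothesis on $f$), and one cofibrant leg suffices for the strict pushout to be a homotopy pushout. Your secondary induction showing that $\iota_{n-1}$ is itself a closed fibrewise cofibration is correct but unneeded.

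For the pullback, the paper does \emph{not} invoke Str{\o}m structures at all. It writes down the standard fibrewise homotopy pullback $L$ explicitly, decomposes each path $\gamma$ in $\prod_B^{n+1}X=X\times_B\prod_B^n X$ as $(\gamma_1,\gamma_2)$, and defines by hand a fibrewise homotopy inverse $\omega':L\to E\times_B T^{n-1}_B(f)$, $(e,\bar{x},(b,\gamma),x,\bar{y})\mapsto(e,\bar{y})$, together with an explicit contraction sliding along $\gamma_1$ and $\gamma_2$ independently. The product structure is what makes this work: the endpoint $(f(e),\gamma_2(t))$ stays in $E\times_B\prod_B^n X$ for all $t$, and $(\gamma_1(1-t),\bar{y})$ stays in $X\times_B T^{n-1}_B(f)$.

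Your plan to reach the same conclusion via a Str{\o}m structure is workable, but be careful: for two arbitrary closed cofibrations $A,B\hookrightarrow Z$ the intersection $A\cap B$ is \emph{not} in general the homotopy pullback (take the upper and lower semicircles in $S^1$). What saves you here is precisely that your Str{\o}m structure for $X\times_B W_{n-1}$ is of product form $\mathrm{id}_X\times H'$, hence preserves the other subspace $E\times_B\prod_B^n X$. You should make that point explicit; as written, ``built from the fibrewise Str{\o}m structure'' suggests a generality that does not hold. Once you do, your deformation retraction amounts to the same path-sliding the paper writes down directly---so the paper's argument is both shorter and avoids the auxiliary cofibration induction.
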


\begin{proof} It is evident for $n=0.$ Next, we  check that the following square is a fibrewise homotopy pullback:
$$\xymatrix{
  {E\times _BT^{n-1}_B(f)} \ar@{_{(}->}[d]_{id\times _Bj_{n-1}}
   \ar[rr]^{f \times _Bid}
              &  & {X\times _BT^{n-1}_B(f)}
                \ar@{_{(}->}[d]_{id\times _Bj_{n-1}}  \\
  {E\times _B\prod _B^{n}X} \ar[rr]_{f \times _B id}
              &  & {\prod _B^{n+1}X}             }$$
The standard fibrewise homotopy pullback of $f \times _Bid$ and
$id_X\times _Bj_{n-1}$, call it $L$, is given by the elements
\begin{center}$(e,\bar{x},(b,\gamma ),x,\bar{y})\in (E\times _B
\prod _B^{n}X)\times P_B(\prod _B^{n+1}X)\times (X\times _B
T^{n-1}_B(f))$\end{center} \noindent for which $\gamma
(0)=(f(e),\bar{x})$ and $\gamma (1)=(x,\bar{y})$. Define $\omega
:E\times _BT^{n-1}_B(f)\rightarrow L$ by
$\omega(e,\bar{y})=(e,\bar{y},(b,C_{(f(e),\bar{y})}),f(e),\bar{y})$
(where $C_{(f(e),\bar{y})}$ denotes the constant path in
$(f(e),\bar{y})$), and $\omega ':L\rightarrow E\times _B
T^{n-1}_B(f)$ by $\omega '(e,\bar{x},(b,\gamma
),x,\bar{y})=(e,\bar{y}).$ Then, $\omega '\omega=id$ and $\omega
\omega '\simeq _Bid$ through the fibrewise homotopy
\begin{center}$H(e ,\bar{x},(b,\gamma ),x,\bar{y};t)=(e,\gamma
_2(t),(b,\delta (t)), \gamma _1(1-t),\bar{y})$,\end{center} being
$\gamma =(\gamma _1,\gamma _2)$ and $\delta (t)(s)=(\gamma
_1(s(1-t)),\gamma _2((1-s)t+s)).$

Finally, taking into account that $f\times _B id:E\times _B
T^{n-1}_B(f)\rightarrow X\times _BT^{n-1}_B(X)$ is a closed
fibrewise cofibration, the fibrewise homotopy pushout of $f\times
_Bid$ and the inclusion $id\times _Bj_{n-1}$ is just its honest
fibrewise pushout. The result follows by induction.

\end{proof}

\begin{remark}
If $f$ is any fibrewise map, then we can factor it through a
closed fibrewise cofibration followed by a fibrewise homotopy
equivalence
$$\xymatrix{
{E} \ar[rr]^{f} \ar[dr]_{f'} & & {X} \\
 & {X'} \ar[ur]^{\simeq } &
}$$ Therefore any fibrewise map can be considered, up to fibrewise
homotopy, as a closed fibrewise cofibration. Moreover, as this
factorization can be taken through the fibrewise mapping cylinder,
if $E$ and $X$ are normal, then $X'$ is also normal.
\end{remark}

\begin{theorem}\label{three-notions}
Let $f:E\rightarrow X$ be any fibrewise map between normal spaces,
or a closed fibrewise cofibration with $X$ normal. Then the
following statements are equivalent:
\begin{enumerate}
\item[(i)] $\mbox{secat}_B(f)\leq n$

\item[(ii)] The diagonal map $\Delta _{n+1}:X\rightarrow \prod
_B^{n+1}X$ factors, up to fibrewise homotopy, through the
fibrewise sectional n-fat wedge
$$\xymatrix{ {X} \ar[r] \ar[dr]_{\Delta _{n+1}} & {T^n_B(f)} \ar[d]^{j_n} \\ & {\prod _B^{n+1}X} }$$

\item[(iii)] The $n$-th fibrewise Ganea map
$p_n:G^n_B(f)\rightarrow X$ admits a fibrewise homotopy section.
\end{enumerate}
\end{theorem}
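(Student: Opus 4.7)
The plan is to pair the implications: (ii) $\Leftrightarrow$ (iii) is essentially formal from the Cube Lemma machinery, while (i) $\Leftrightarrow$ (ii) is the geometric core where normality enters.

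For (ii) $\Leftrightarrow$ (iii), I would invoke Lemma \ref{WG} directly. Since that square is a fibrewise homotopy pullback, the weak universal property tells us that a fibrewise homotopy lift of $\Delta_{n+1}$ through $j_n$ corresponds to a fibrewise homotopy section over $X$ of the fibrewise whisker map into the standard homotopy pullback, and that pullback is exactly $p_n : G^n_B(f) \to X$. So a homotopy section of $p_n$ and a homotopy factorization of $\Delta_{n+1}$ through $j_n$ are equivalent pieces of data.

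For (i) $\Leftrightarrow$ (ii), I would begin with the reduction indicated in the remark preceding the theorem: factor $f$ through the fibrewise mapping cylinder as a closed fibrewise cofibration $f'\colon E\to X'$ followed by a fibrewise homotopy equivalence $X'\simeq X$; when $E$ and $X$ are normal, so is $X'$. Both $\mathrm{secat}_B$ and the homotopy factorization condition are invariant under this replacement, so I may assume $f$ is itself a closed fibrewise cofibration and appeal to the explicit description of $T^n_B(f)$ in Lemma \ref{fat-cof}.

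For (ii) $\Rightarrow$ (i), given $\sigma\colon X\to T^n_B(f)$ with $j_n\sigma \simeq_B \Delta_{n+1}$, apply Corollary \ref{rem2} to obtain an open halo $U$ of $E$ in $X$ with a fibrewise strong deformation retraction $r\colon U\to E$. The open sets $W_i\subseteq T^n_B(f)$ cut out by requiring the $i$-th coordinate to lie in $U$ cover $T^n_B(f)$ (by Lemma \ref{fat-cof}), so $U_i=\sigma^{-1}(W_i)$ is an open cover of $X$ with $n+1$ members. On $U_i$ the composite $r\circ\mathrm{pr}_i\circ\sigma|_{U_i}\colon U_i\to E$ is a fibrewise homotopy section of $f$: the deformation retraction gives $f\circ r\simeq_B \mathrm{inc}_{U\hookrightarrow X}$, and $\mathrm{pr}_i\circ\sigma|_{U_i}\simeq_B \mathrm{inc}\colon U_i\hookrightarrow X$ by the hypothesis $j_n\sigma\simeq_B\Delta_{n+1}$.

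For (i) $\Rightarrow$ (ii), which is the main obstacle, I would exploit normality of $X$ to glue local sections. Starting from a fibrewise sectional cover $\{U_0,\dots,U_n\}$ with sections $s_i\colon U_i\to E$ and fibrewise homotopies $H_i\colon U_i\times I\to X$ from $\mathrm{inc}$ to $f\circ s_i$, I would perform a double shrinking $\overline{W_i}\subseteq V_i\subseteq \overline{V_i}\subseteq U_i$ with $\{W_i\}$ still covering $X$, and choose Urysohn functions $\tau_i\colon X\to I$ with $\tau_i=1$ on $\overline{W_i}$ and $\tau_i=0$ on $X\setminus V_i$. Then I define $\sigma_i\colon X\to X$ by $\sigma_i(x)=H_i(x,\tau_i(x))$ for $x\in U_i$ and $\sigma_i(x)=x$ for $x\in X\setminus\overline{V_i}$; the two prescriptions agree on the overlap because $\tau_i$ vanishes there, and $\sigma_i\simeq_B\mathrm{id}_X$ via $K_i(x,s)=H_i(x,s\tau_i(x))$ glued analogously. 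Since each $x\in X$ lies in some $W_i$, the coordinate $\sigma_i(x)=f(s_i(x))$ is in $E$, so $\sigma=(\sigma_0,\dots,\sigma_n)$ lands in $T^n_B(f)$, and the product homotopy $(K_0,\dots,K_n)$ shows $j_n\sigma\simeq_B\Delta_{n+1}$. The delicate point will be verifying continuity of each $\sigma_i$ from the pasting lemma on the open cover $U_i\cup(X\setminus\overline{V_i})=X$, which is precisely what normality is guaranteeing through the shrinking and Urysohn steps.
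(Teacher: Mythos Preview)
Your proposal is correct and follows essentially the same route as the paper: (ii)$\Leftrightarrow$(iii) via Lemma~\ref{WG} and the weak universal property, reduction to a closed fibrewise cofibration via the mapping cylinder, (ii)$\Rightarrow$(i) via Corollary~\ref{rem2} and pulling back a halo of $E$ along the coordinates of $\sigma$, and (i)$\Rightarrow$(ii) via a normal shrinking plus Urysohn functions to glue the local homotopies into a global map landing in the explicit model of Lemma~\ref{fat-cof}. The only cosmetic differences are notational (your $W_i\subseteq T^n_B(f)$ versus the paper's direct use of $\varphi_i^{-1}(U)$, and your $\overline{W_i}\subseteq V_i\subseteq\overline{V_i}\subseteq U_i$ versus the paper's $A_i\subseteq\Theta_i\subseteq B_i\subseteq U_i$), but the arguments are the same.
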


\begin{proof}
Statements (ii) and (iii) are equivalent as a consequence of Lemma
\ref{WG} and the weak universal property for fibrewise homotopy
pullbacks. Now we check that statements (i) and (ii) are
equivalent. Taking into account the above remark and the fact that
statements (i)-(iii) are invariant by fibrewise homotopy
equivalences, we can suppose without loss of generality that $f$
is a fibrewise closed cofibration with $X$ normal.

Assume that $\mbox{secat}_B(f)\leq n$ and consider
$\{U_i\}_{i=0}^n$ an open cover of $X$ and
$H_i:I_B(U_i)\rightarrow X$ a fibrewise homotopy satisfying
$H_i(x,0)=x$ and $H_i(x,1)=fs_i(x),$ for all $x\in U_i,$ where
$s_i:U_i\rightarrow E$ is a fibrewise map. As $X$ is a normal
space there exist, for each $i$, closed subsets $A_i,B_i$ and an
open subset $\Theta _i$ such that $A_i\subseteq \Theta _i\subseteq
B_i\subseteq U_i$ and $\{A_i\}_{i=0}^n$ covers $X.$ Now, by
Urysohn characterization of normality, take $h_i:X\rightarrow I$ a
map such that $h_i(A)=\{1\}$ and $h_i(X\setminus \Theta
_i)=\{0\}.$ Then we obtain a fibrewise homotopy
$L=(L_0,...,L_n):I_B(X)\rightarrow \prod ^{n+1}_BX$, where
$L_i:I_B(X)\rightarrow X$ is the fibrewise map defined as follows:
$$L_i(x,t)=\left\{\begin{array}{ll} x  & ,x\in X\setminus B_i \\
{H_i(x,th_i(x))} & ,x\in U_i \end{array}\right.$$ Taking into
account Lemma \ref{fat-cof} and that $\{A_i\}_{i=0}^n$ covers $X$
it is straightforward to check that $H:\Delta _{n+1}\simeq _B
j_n\varphi,$ where $\varphi:X \rightarrow T^n_B(f)$ is defined as
$\varphi (x):=L(x,1).$

Conversely, suppose a fibrewise map $\varphi:X\rightarrow
T^n_B(f)$ and a fibrewise homotopy $L:\Delta _{n+1}\simeq _B
j_n\varphi.$ Then, $L=(L_0,...,L_n)$ and $j_n\varphi=(\varphi
_0,...,\varphi _n)$ where $L_i:I_B(X)\rightarrow X$ and
$\varphi_i:X\rightarrow X$ for each $i.$ Since $f:E\hookrightarrow
X$ is fibrewise closed cofibration, by Remark \ref{rem2} we have
that $E$ is a fibrewise strong deformation retract of an open
neighborhood $U$ in $X.$ Take a fibrewise retraction
$r:U\rightarrow E$ and a fibrewise homotopy $H:I_B(U)\rightarrow
X$ such that $H(x,0)=x,$ $H(x,1)=fr(x)$ for all $x\in U,$ and
$H(e,t)=e=f(e),$ for all $e\in E$ and $t\in I.$ Defining
$U_i=\varphi _i^{-1}(U)$ we obtain $\{U_i\}_{i=0}^n$ an open cover
of $X$ and fibrewise homotopies $G_i:I_B(U_i)\rightarrow X$
$$G_i(x,t)=\left\{\begin{array}{ll} {L_i(x,2t)} &, 0\leq t\leq \frac{1}{2}
\\[.2pc]
{H(\varphi_i(x),2t-1)} &, \frac{1}{2}\leq t\leq 1 \\[.2pc]
\end{array}\right.$$ \noindent If $s_i:U_i\rightarrow E$ denotes the composite
$U_i\stackrel{\varphi _i}{\longrightarrow
}{U}\stackrel{r}{\longrightarrow }E$, then $G_i(x,0)=x$ and
$G_i(x,1)=fs_i(x),$ for all $x\in U_i.$
\end{proof}

\begin{remark}
Observe that taking $f=s_X$ in the above theorem we obtain the
corresponding Whitehead-Ganea characterization of
$\mbox{cat}^*_B(-).$ Compare with \cite{I-S}.
\end{remark}

\subsection{Fibrewise sectional category and ordinary sectional category.}

Recall that if $f:X\rightarrow Y$ is \emph{any} map, then the
\emph{sectional category} $\mbox{secat}(f)$ can be defined as the
least non-negative integer $n$ such that $Y$ admits an open cover
$\{U_i\}_{i=0}^n$ where each $U_i$ has a local homotopy section
$s_i:U_i\rightarrow X$ of $f.$ When $f$ is a Hurewicz fibration,
then in this definition we can take local strict sections,
recovering the usual notion of sectional category (or Schwarz
genus \cite{Sch}) for fibrations.

On the other hand, one of the main motivations of the invariant
$\mbox{cat}_B^*(-)$ is the fact that it retrieves the notion of
\emph{topological complexity} of a space in the sense of Farber
\cite{F,F2}. The topological complexity of a space $X$, denoted
$\mbox{TC}(X)$, is defined as the sectional category of $\pi
:X^I\rightarrow X\times X,$ $\alpha \mapsto (\alpha (0),\alpha
(1)).$ Of course, since the fibration $\pi :X^I\rightarrow X\times
X$ is homotopy equivalent to the diagonal map $\Delta
_X:X\rightarrow X\times X,$ then we can redefine $\mbox{TC}(X)$ as
$\mbox{secat}(\Delta _X).$ The product space $X\times X$ can be
seen as a fibrewise pointed space over $X$ with $\Delta _X$ the
diagonal map as the section and $pr_2:X\times X\rightarrow X$ the
projection. Denoting by $d(X)$ such a fibrewise pointed space we
have the following known equality (see \cite{I-S}):
$$\mbox{TC}(X)=\mbox{cat}_X^*(d(X))$$
This fact can be generalized for the unpointed LS category. Under
a not very restrictive condition on the fibrewise pointed space
$X$ we will be able to prove that
$\mbox{cat}_B^*(X)=\mbox{secat}(s_X).$

By a \emph{fibrant space} over $B$ we mean a fibrewise space over
$B$, $X$, such that the projection $p_X:X\rightarrow B$ is a
Hurewicz fibration. A \emph{pointed fibrant space} over $B$ is a
fibrant space over $B$ which is also pointed over $B.$

\begin{theorem}\label{secat}
Let $f:E\rightarrow X$ be a fibrewise map between fibrant spaces
over $B.$ Then
$$\mbox{secat}_B(f)=\mbox{secat}(f)$$
\end{theorem}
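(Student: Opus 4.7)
The inequality $\mbox{secat}(f)\leq \mbox{secat}_B(f)$ is immediate, since every fibrewise homotopy is in particular an ordinary homotopy, so a fibrewise local homotopy section is an ordinary local homotopy section. The work therefore lies in proving $\mbox{secat}_B(f)\leq \mbox{secat}(f)$, converting ordinary local sections and ordinary homotopies to fibrewise ones by exploiting the Hurewicz fibration property of $p_E$ and $p_X$.

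Assume $\mbox{secat}(f)\leq n$ and choose an open cover $\{U_i\}_{i=0}^n$ of $X$ together with maps $s_i:U_i\rightarrow E$ and homotopies $H_i:U_i\times I\rightarrow X$ with $H_i(-,0)=in_{U_i}$ and $H_i(-,1)=fs_i$. Since $f$ is fibrewise, $p_XH_i:U_i\times I\rightarrow B$ is a homotopy from $p_X|_{U_i}$ to $p_Es_i$. Applying the HLP of the Hurewicz fibration $p_E$ to the reversed base homotopy $p_XH_i(-,1-t)$ with initial lift $s_i$, one obtains $\widetilde{\alpha}:U_i\times I\rightarrow E$ whose terminal value $\widetilde{s}_i:=\widetilde{\alpha}(-,1)$ satisfies $p_E\widetilde{s}_i=p_X|_{U_i}$, so $\widetilde{s}_i$ is a fibrewise map. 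Concatenating $H_i$ with $f\widetilde{\alpha}$ produces an ordinary homotopy $K_i:U_i\times I\rightarrow X$ from $in_{U_i}$ to $f\widetilde{s}_i$.

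It remains to rectify $K_i$ to a fibrewise homotopy. By construction, the loop $p_XK_i(x,-)$ is of the form $\gamma_x\ast\gamma_x^{-1}$ (with $\gamma_x=p_XH_i(x,-)$), and the standard ``fold'' furnishes a canonical null-homotopy $\Gamma:U_i\times I\times I\rightarrow B$ to the constant loop $p_X|_{U_i}$, stationary on $U_i\times\partial I\times I$. Applying the HLP of the Hurewicz fibration $p_X$ for the cofibred pair $(I\times I,\,I\times\{0\}\cup \partial I\times I)$, one lifts $\Gamma$ subject to $K_i$ on $U_i\times I\times\{0\}$ and to the constant homotopies at $in_{U_i}$ and $f\widetilde{s}_i$ on $U_i\times\{0\}\times I$ and $U_i\times\{1\}\times I$; the restriction of the lift to $U_i\times I\times\{1\}$ is then a fibrewise homotopy $L_i$ from $in_{U_i}$ to $f\widetilde{s}_i$. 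This exhibits $U_i$ as fibrewise sectional, so $\mbox{secat}_B(f)\leq n$. The main obstacle is precisely this last rel-endpoints lifting: it relies on $p_X$ being Hurewicz together with the consistency check that both $in_{U_i}$ and $f\widetilde{s}_i$ project to $p_X|_{U_i}$, which is what $\Gamma$ prescribes on the boundary---and it is exactly here that the fibrancy of $X$, not merely of $E$, is used.
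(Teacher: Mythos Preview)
Your proof is correct, but it follows a genuinely different route from the paper's.

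The paper does not attempt to rectify the local homotopy sections directly. Instead it factors $f$ in $\mathbf{Top}$ as a homotopy equivalence $\lambda:E\to\widehat{E}$ followed by a Hurewicz fibration $p:\widehat{E}\to X$, observes that this factorization lives in $\mathbf{Top}_B$ with $\widehat{E}$ fibrant, and then invokes the fact that a homotopy equivalence between fibrant objects is a \emph{fibrewise} homotopy equivalence. This reduces the problem to $p$, for which any \emph{strict} local section $s:U\to\widehat{E}$ is automatically fibrewise (since $p_{\widehat{E}}s=p_Xps=p_X|_U$), so nothing needs to be rectified at all.

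Your argument avoids the factorization and the appeal to the ``homotopy equivalence between fibrants is fibrewise'' principle, working instead by two explicit lifting steps: first the HLP for $p_E$ to make the section fibrewise, then the relative HLP for $p_X$ against the box pair $(I^2,\,I\times\{0\}\cup\partial I\times I)\cong(I^2,\,I\times\{0\})$ to straighten the homotopy. This is more hands-on and entirely self-contained, using only the Hurewicz property of $p_E$ and $p_X$; it makes very transparent where each fibrancy hypothesis is consumed. The paper's approach is shorter and more conceptual, and---as the paper notes---extends immediately to Dold fibrations, since it only needs $\lambda$ to be a fibrewise homotopy equivalence; your explicit lifting argument would require a bit more care in that generality.
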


\begin{proof}
The inequality $\mbox{secat}(f)\leq \mbox{secat}_B(f)$ is obvious
so it only remains to check $\mbox{secat}_B(f)\leq
\mbox{secat}(f).$ First consider a factorization of $f$ in
$\mathbf{Top}$
$$\xymatrix{
{E} \ar[rr]^{f} \ar[dr]_{\lambda } & & {X} \\
 & {\widehat{E}} \ar[ur]_{p} & }$$ \noindent by a
homotopy equivalence $\lambda $ followed by a Hurewicz fibration
$p.$ We observe that this factorization also has sense in
$\mathbf{Top}_B$ considering in $\widehat{E}$ the composite
$\widehat{E}\stackrel{p}{\longrightarrow
}X\stackrel{p_X}{\longrightarrow }B$ as the natural projection,
which is a Hurewicz fibration. Being $E$ and $\widehat{E}$ fibrant
spaces over $B$ we conclude that actually $\lambda :E\rightarrow
\widehat{E}$ is a fibrewise homotopy equivalence. Furthermore,
$\mbox{secat}(f)=\mbox{secat}(p)$ and
$\mbox{secat}_B(f)=\mbox{secat}_B(p).$

If $U$ is any open subset of $X$ with a strict local section
$s:U\rightarrow \widehat{E}$ of $p,$ then $s$ is necessarily a
fibrewise map over $B$ and therefore $U$ is also an open fibrewise
sectional subset of $X.$ The result follows taking open coverings.
\end{proof}

\begin{corollary}\label{secat2}
Let $X$ be a pointed fibrant space over $B.$  Then
$$\mbox{cat}_B^*(X)=\mbox{secat}(s_X).$$
\end{corollary}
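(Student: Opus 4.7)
The plan is to deduce this corollary by chaining together two facts already available in the paper. The first is the identity $\mbox{cat}^*_B(X) = \mbox{secat}_B(s_X)$, which was pointed out right after the definition of fibrewise sectional category: a fibrewise categorical open subset of $X$ is literally an open subset $U \subseteq X$ admitting a fibrewise homotopy section of $s_X: B \to X$ (up to composing with the projection $p_X|U$). The second fact is Theorem \ref{secat}, which equates the fibrewise sectional category with the ordinary sectional category for a fibrewise map between two fibrant spaces over $B$.

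So the first step is simply to rewrite $\mbox{cat}^*_B(X) = \mbox{secat}_B(s_X)$. The next step is to check that $s_X: B \to X$ falls into the setting of Theorem \ref{secat}, i.e.\ that both source and target are fibrant over $B$. The target $X$ is fibrant by hypothesis. The source $B$, regarded as a fibrewise space over itself via the identity projection $\mbox{id}_B: B \to B$, is trivially fibrant since the identity map is clearly a Hurewicz fibration (constant path lifts give lifts of any homotopy). Hence Theorem \ref{secat} applies and yields $\mbox{secat}_B(s_X) = \mbox{secat}(s_X)$.

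Combining the two identities gives $\mbox{cat}^*_B(X) = \mbox{secat}(s_X)$, which is the desired equality. There is essentially no obstacle here; the only thing to double-check is that the section $s_X$ is indeed a fibrewise map with respect to the identity projection on $B$, which is immediate from the defining equation $p_X \circ s_X = \mbox{id}_B$ that makes $s_X$ a section. Thus the corollary is a direct specialization of Theorem \ref{secat} to the case $f = s_X$, $E = B$.
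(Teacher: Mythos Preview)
Your proof is correct and follows exactly the intended approach: the paper states this result as an immediate corollary of Theorem \ref{secat} with no separate proof, and your argument---specializing $f$ to $s_X:B\to X$ and noting that both $B$ (with identity projection) and $X$ are fibrant over $B$---is precisely the implicit reasoning. There is nothing to add.
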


As an interesting and surprising consequence of Theorem
\ref{secat} and its corollary we have that, in order to deal with
$\mbox{secat}_B(f)$ or $\mbox{cat}^*_B(X)$ we may forget the
projections and work in the non-fibrewise world, at least for weak
fibrant spaces.

\begin{remark}Observe that, in general, the above equality is not true. For
example, consider $X=([0,1]\times \{0,1\})\cup (\{0\}\times
[0,1])$ and $B=[0,1]$ with $s_X(t)=(t,0)$ and $p_X(t,s)=t.$ If $X$
were fibrewise contractible, then the fibre $p_X^{-1}(\{t\})$
would have the same ordinary homotopy type of $\{t\},$ for all
$t\in [0,1]$; but by connectivity reasons this is not the case.
Therefore, by Corollary \ref{invariance} we have that
$\mbox{cat}^*_B(X)\geq 1.$ But $\mbox{secat}(s_X)=0,$ since $X$ is
contractible in the ordinary sense.\end{remark}

The hypothesis of Theorem \ref{secat} can be relaxed. Recall that
a \emph{Dold fibration} (or \emph{weak fibration}) is a map
$p:X\rightarrow B$ which satisfies the weak covering homotopy
property (see \cite{Dld2} for details). A Dold fibration is
completely characterized by the fact that it is fibrewise homotopy
equivalent to a Hurewicz fibration. Then we can consider
\emph{weak fibrant spaces} over $B,$ i.e., fibrewise spaces over
$B$ in which the projection is a Dold fibration. It is immediate
to check the equality $\mbox{secat}_B(f)=\mbox{secat}(f)$ when $E$
and $X$ are just weak fibrant spaces over $B,$ and the equality
$\mbox{cat}_B^*(X)=\mbox{secat}(s_X)$ when $X$ is a pointed weak
fibrant space over $B.$

\section{The fibrewise pointed case.}

The category of fibrewise pointed spaces and fibrewise pointed
maps will be denoted by $\mathbf{Top}(B).$ Observe that the space
$B$ together with the identity is the zero object so that
$\mathbf{Top}(B)$ is a pointed category. Any subspace $A\subseteq
X$ \emph{containing the section} (i.e., $s_X(B)\subseteq A$) is a
fibrewise pointed space. This way, the inclusion $A\hookrightarrow
X$ is a fibrewise pointed map. These class of subspaces will be
called \emph{fibrewise pointed subsets} of $X.$

For any fibrewise pointed space $X$ we can consider its
\emph{fibrewise pointed cylinder} as the following pushout
$$\xymatrix{
{B\times I} \ar[r]^{pr} \ar[d]_{s_X\times id} & {B} \ar[d] \\
{X\times I} \ar[r]  & {I_B^B(X)} }$$ \noindent with the obvious
projection induced by the pushout property. This pointed cylinder
functor gives the notion of \emph{fibrewise pointed homotopy}
between fibrewise pointed maps, that will be denoted by $\simeq
_B^B.$ We point out that giving a fibrewise pointed homotopy
$F:I_B^B(X)\rightarrow Y$ is the same as giving a fibrewise
homotopy $F':I_B(X)\rightarrow Y$ such that $F'(s_X(b),t)=s_X(b),$
for all $b\in B$ and $t\in I.$ The notion of \emph{fibrewise
pointed homotopy equivalence} comes naturally. On the other hand
we can consider $$P_B^B(X)=B\times _{B^I}X^I=\{(b,\alpha )\in
B\times X^I:c_b=p_X\alpha \}$$ \noindent which is simply the
fibrewise space $P_B(X)$ together with the section
$(id_B,cs_X):B\rightarrow P_B^B(X)$ induced by the pullback
property. There are functors
$$I_B^B,P_B^B:\mathbf{Top}(B)\rightarrow \mathbf{Top}(B)$$
\noindent as well as induced natural transformations
$i_0,i_1:X\rightarrow I^B_B(X)$, $\rho :I^B_B(X)\rightarrow X$ and
$d_0,d_1:P_B^B(X)\rightarrow X,$ $c:X\rightarrow P_B^B.$ Moreover,
$(I_B^B,P_B^B)$ is an adjoint pair in the sense of Baues.

Associated to these functors, there are defined the notions of
\emph{(closed) fibrewise pointed cofibration} and of
\emph{fibrewise pointed fibration}, which are characterized by the
natural Homotopy Extension Property and the Homotopy Lifting
Property in $\mathbf{Top}(B),$ respectively. Moreover,
$\mathbf{Top}(B)$ has an $I$-category and a $P$-category structure
in the sense of Baues (\cite[p.31]{B}).

Any fibrewise pointed map which is a fibrewise cofibration is a
fibrewise pointed cofibration. Also, any fibrewise fibration is a
fibrewise pointed fibration. Unfortunately, as P. May and
Sigurdsson assert in \cite[p.82]{M-S}, on each case the converse
is not true even for the simplest case, in which $B$ is a point.
Despite this fact, in order to deal with LS-type invariants in the
homotopy category of $\mathbf{Top}(B),$ we still can achieve a
reasonable structure on the category of fibrewise pointed spaces
based one the fibrewise structure.

A fibrewise \emph{well-pointed} space is a fibrewise pointed space
$X$ in which the section $s_X:B\rightarrow X$ is a closed
fibrewise cofibration. Let $\mathbf{Top}_w(B)$ denote the full
subcategory of $\mathbf{Top}(B)$ consisting of fibrewise
well-pointed spaces. One cannot expect $\mathbf{Top}_w(B)$ to be a
model category as it is not closed under finite limits and
colimits. However the following proposition is enough for our
purposes. First we need a technical lemma, whose proof is
analogous to the ordinary case and therefore is omitted (see
\cite{S}).

\begin{lemma}\label{cof-comp}
Let $j:A\rightarrow X$ and $i:X\rightarrow Y$ be fibrewise maps
such that $i$ and $ij$ are closed fibrewise cofibrations. Then $j$
is also a closed fibrewise cofibration.
\end{lemma}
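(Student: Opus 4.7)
The plan is to verify the equivalent characterization of closed fibrewise cofibrations by a fibrewise Str{\o}m structure given in Proposition~\ref{strom}, mirroring Str{\o}m's classical cancellation argument (``if $(Y,X)$ and $(Y,A)$ are NDR pairs with $A\subseteq X$, then so is $(X,A)$''). Since closed fibrewise cofibrations are closed fibrewise embeddings, I would reduce to the case $A\subseteq X\subseteq Y$ of closed fibrewise subspaces with $i$ and $j$ the inclusions. The hypotheses then supply, via Proposition~\ref{strom}, fibrewise Str{\o}m structures $(\psi,K)$ for $(Y,X)$ and $(\varphi,L)$ for $(Y,A)$, and the goal is to build from these a fibrewise Str{\o}m structure $(\varphi',H)$ for the pair $(X,A)$.

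For the Str{\o}m function, the natural candidate is the restriction $\varphi' := \varphi|_X : X \to I$, which evidently satisfies $A = (\varphi')^{-1}(\{0\})$ as a closed subset of $X$. For the Str{\o}m homotopy I would combine $L$ (which deforms $X$ into $A\subseteq X$ in ``time'' $\varphi$) with $K$ (which retracts an open halo of $X$ in $Y$ back onto $X$), writing $H(x,t) = K(L(x,t),\, u(x,t))$ for a suitably chosen continuous $u: I_B(X)\to I$. The three Str{\o}m conditions for $H$---namely $H(\cdot,0)=\mathrm{id}$, $H(a,t)=a$ for $a\in A$, and $H(x,t)\in A$ whenever $t>\varphi'(x)$---then follow formally from the corresponding properties of $K$ and $L$ together with the facts that $K$ fixes $X$ pointwise and $A\subseteq X$. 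Fibrewiseness of $H$ is automatic, since $K$, $L$, and any continuous combination of them are fibrewise maps over $B$, so no part of the argument needs to leave $\mathbf{Top}_B$.

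The main obstacle is the continuous choice of $u$: one needs $u(x,t) > \psi(L(x,t))$ for every $(x,t)\in I_B(X)$, which is precisely what forces $H(x,t)\in X$. Now $\psi\circ L$ vanishes on $X\times\{0\}$, on $A\times I$, and on $\{(x,t)\colon t>\varphi(x)\}$, but in general may attain the value $1$ elsewhere, and at such points no value of the second argument of $K$ returns an element of $X$. As in Str{\o}m's original proof, this is resolved by first replacing $(\psi,K)$ and $(\varphi,L)$ by modified Str{\o}m structures---via standard truncation and time-reparametrization of the deformations---so that $\psi\circ L<1$ on $X\times I$; once this is arranged, a Urysohn-type construction furnishes the desired $u$ and the verification of the Str{\o}m axioms for $(\varphi',H)$ is routine. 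All these modifications are built from the given fibrewise data and therefore respect the fibrewise structure, so the non-fibrewise argument transfers without substantive change, justifying the statement that the proof is ``analogous to the ordinary case'' and allowing an appeal to \cite{S}.
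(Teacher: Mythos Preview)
Your proposal is correct and matches exactly what the paper intends: the paper omits the proof entirely, simply remarking that it is ``analogous to the ordinary case'' and citing Str{\o}m \cite{S}, and you have supplied precisely that fibrewise adaptation via Str{\o}m structures (Proposition~\ref{strom}). One small sharpening: since $A$ and $X$ are closed, continuity gives $K(y,\psi(y))\in X$ and $L(y,\varphi(y))\in A$, so the explicit choice $u(x,t)=\psi(L(x,t))$ already makes $H(x,t)=K(L(x,t),\psi(L(x,t)))$ land in $X$ without any preliminary truncation or reparametrization.
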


\begin{proposition}\label{importante2}
$\mathbf{Top}_w(B)$ is closed under the pullbacks of fibrewise
pointed maps which are fibrewise fibrations. Similarly,
$\mathbf{Top}_w(B)$ is closed under the pushouts of fibrewise
pointed maps which are closed fibrewise cofibrations.
\end{proposition}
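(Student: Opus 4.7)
The plan is, in each case, to express the induced section as a composition of closed fibrewise cofibrations, and then to invoke stability of $\overline{cof}_B$ under composition. The main tools are Lemma \ref{crucial} (stability of $\overline{cof}_B$ under base change along fibrewise fibrations) and Lemma \ref{cof-comp} (cancellation); for the pushout half we additionally use that, in the closed model structure of Theorem \ref{main}, cofibrations are stable under pushout.

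For the pullback statement, consider a pullback square
$$\xymatrix{
{P} \ar[r]^{\pi_2} \ar[d]_{\pi_1} & {Y} \ar[d]^{g} \\
{X} \ar[r]_{f} & {Z}
}$$
of fibrewise pointed maps with $g \in fib_B$ and $X,Y,Z \in \mathbf{Top}_w(B)$; the induced section is $s_P(b)=(s_X(b),s_Y(b))$. First I would note that $\pi_1$ is a fibrewise fibration, since $fib_B$ is stable under pullback (a routine check from the definition). The pointedness identities $fs_X = s_Z = gs_Y$ let one identify the pullback of $s_X:B\to X$ along $\pi_1$ with the subspace $g^{-1}(s_Z(B)) \subseteq Y$, and Lemma \ref{crucial} then gives that the corresponding base change $\iota : g^{-1}(s_Z(B)) \to P$, $y\mapsto (s_X(p_Y(y)),y)$, is a closed fibrewise cofibration. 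Analogously, the inclusion $g^{-1}(s_Z(B)) \hookrightarrow Y$ is a closed fibrewise cofibration (base change of $s_Z$ along $g$); using that $s_Y:B\to Y$ factors through it and is itself in $\overline{cof}_B$, Lemma \ref{cof-comp} yields that the corestriction $\bar{s}_Y:B\to g^{-1}(s_Z(B))$ also lies in $\overline{cof}_B$. A direct verification shows $s_P = \iota\circ\bar{s}_Y$, so $s_P \in \overline{cof}_B$ and $P \in \mathbf{Top}_w(B)$.

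For the pushout statement, consider
$$\xymatrix{
{A} \ar[r]^{j} \ar[d]_{k} & {X} \ar[d]^{j'} \\
{Y} \ar[r]_{k'} & {Q}
}$$
with $j,k$ fibrewise pointed, $A,X,Y \in \mathbf{Top}_w(B)$ and $j \in \overline{cof}_B$. Since $\overline{cof}_B$ is stable under pushouts, $j'$ is a closed fibrewise cofibration; and the universal property of the pushout forces $s_Q = j'\circ s_X$, exhibiting $s_Q$ as a composition of two closed fibrewise cofibrations, so $Q \in \mathbf{Top}_w(B)$. The main subtlety of the whole argument lies in the pullback case: one must carefully identify the base change $P\times_X B$ with $g^{-1}(s_Z(B))$ and check that under this identification $s_P$ factors as $\iota\circ\bar{s}_Y$. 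Both verifications rest on $f,g$ being fibrewise pointed together with $s_Z$ being a section (hence injective, with retraction $p_Z$), but the subspace and pullback bookkeeping must be executed carefully.
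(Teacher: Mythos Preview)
Your proof is correct and follows essentially the same approach as the paper's. In the paper's notation the intermediate space $F$ is exactly your $g^{-1}(s_Z(B))$, and the two applications of Lemma~\ref{crucial} together with the cancellation Lemma~\ref{cof-comp} are used in the same way to factor the section of the pullback as a composite of closed fibrewise cofibrations; the pushout half is likewise identical.
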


\begin{proof}
As the cobase change of a fibrewise cofibration is a fibrewise
cofibration, the second statement of the lemma is trivially true.
Now suppose $p:E\rightarrow X$ and $f:X'\rightarrow X$ fibrewise
pointed maps between fibrewise well-pointed spaces where $p$ is a
fibrewise fibration. Consider the following diagram of pullbacks
$$\xymatrix{
{F} \ar[d] \ar[r]^i & {E'} \ar[d]_{p'} \ar[r]^{f'} & {E} \ar[d]^p \\
{B} \ar[r]_{s_{X'}} & {X'} \ar[r]_{f} & {X} }$$ As $X'$ and $X$
are well-pointed and $p$ is a fibrewise fibration, by Lemma
\ref{crucial} $i$ and $f'i$ are closed fibrewise cofibrations.
Considering Proposition \ref{cof-comp} and the fact that
$(f'i)s_{F}=s_E$ is a closed fibrewise cofibration we have that
$F$ is well-pointed and therefore $s_{E'}=is_F$ is a closed
fibrewise cofibration.
\end{proof}

A certain version of the following result also appears in
\cite[Prop 5.2.2, Prop 5.2.3]{M-S}.

\begin{proposition}\label{importante1}
Let $f:X\rightarrow Y$ be a fibrewise pointed map between
fibrewise well-pointed spaces over $B.$ Then,

\begin{enumerate}
\item[(i)] $f$ is a fibrewise pointed fibration if and only if $f$ is a fibrewise
fibration;

\item[(ii)] If $f$ is a closed map, then $f$ is a fibrewise pointed cofibration if and only if $f$ is a fibrewise
cofibration;

\item[(iii)] $f$ is a fibrewise pointed homotopy equivalence if and only if $f$ is a fibrewise homotopy
equivalence.
\end{enumerate}
\end{proposition}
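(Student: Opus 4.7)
The three equivalences will be proved by transferring lifting and extension problems back and forth between the pointed and unpointed categories, with the well-pointedness of $X$ and $Y$ providing the essential bridge. For part (i), the direction fibrewise pointed fibration $\Rightarrow$ fibrewise fibration proceeds by basepoint augmentation: given an unpointed HLP problem $(Z, g\colon Z\to X, H\colon I_B(Z)\to Y)$ for $f$, I would form $Z_+ := Z\sqcup B$ with $B$ as a clopen section (so that $Z_+$ is automatically well-pointed), extend $g$ to a pointed $g_+\colon Z_+\to X$ using $s_X$ on the $B$-summand, and extend $H$ to a pointed $H_+\colon I_B(Z_+)\to Y$ using $s_Y\circ pr_B$ on $B\times I$. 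The pointed HLP of $f$ then produces a pointed lift $\widetilde{H}_+$ whose restriction to $I_B(Z)$ solves the original problem. For the converse, I would apply the Relative HLP from Proposition \ref{prev} to the closed fibrewise cofibration $s_Z\colon B\to Z$ (available since we take $Z\in\mathbf{Top}_w(B)$): the partial lift on $Z\times \{0\}\cup s_Z(B)\times I$ is prescribed to be $g$ on the first piece and $s_X\circ pr_B$ on the second, which are compatible by pointedness of $g$ and $H$, and the resulting lift is automatically pointed.

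For part (ii), the direction closed fibrewise cofibration $\Rightarrow$ closed fibrewise pointed cofibration uses the fibrewise Str\o m structure from Proposition \ref{strom}. Taking $(\varphi, K)$ a Str\o m structure on the pair $(Y, X)$ (where $X$ is regarded as a closed fibrewise subspace of $Y$ via $f$), pointedness of $f$ forces $s_Y(B)\subseteq X$, so $\varphi(s_Y(b)) = 0$ and $K(s_Y(b), t) = s_Y(b)$ for all $b, t$. The standard extension formula,
\[
\widetilde{H}(y,t) = \begin{cases} g(K(y,t)), & t\leq \varphi(y), \\ H(K(y,t), t-\varphi(y)), & t\geq \varphi(y), \end{cases}
\]
then extends any pointed pair $(g\colon Y\to Z, H\colon I_B(X)\to Z)$; evaluating at $y = s_Y(b)$ yields $\widetilde{H}(s_Y(b), t) = H(s_X(b), t) = s_Z(b)$, confirming pointedness.

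The converse in (ii) is the crucial step: a naive augmentation $Z\mapsto Z\sqcup B$ of the target fails because $s_Y(B)$ need not be clopen in $Y$, so the extended $g^+$ is generally discontinuous. Here I would use the halo structure from Corollary \ref{rem2}: choose an open halo $U$ of $s_Y(B)$ in $Y$ with halo function $\phi\colon Y\to I$ and a fibrewise strong deformation retraction of $U$ onto $s_Y(B)$, and use $\phi$ together with this retraction to continuously interpolate the data $(g, H)$ near the section, producing pointed maps $(g^+, H^+)$ into an auxiliary well-pointed target $Z^+$ that projects back to $Z$. The pointed HEP of $f$ then supplies a pointed extension in $Z^+$, which projects down to the desired unpointed extension.

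For part (iii), one direction is immediate. For the converse, given an unpointed inverse $g$ of $f$ with unpointed homotopies $H_1\colon gf\simeq_B \mathrm{id}_X$ and $H_2\colon fg\simeq_B \mathrm{id}_Y$, I would execute a standard Whitehead-type adjustment: use the pointed HEP of $s_Y$ (available from part (ii), since $s_Y\colon B\to Y$ is a closed fibrewise cofibration) to deform $g$ into a pointed map $g'$ with $g' s_Y = s_X$, then analogously straighten $H_1, H_2$ to fibrewise pointed homotopies using the pointed HEPs of $s_X$ and $s_Y$. The main obstacle throughout is the converse direction of (ii), whose halo-interpolation argument requires careful bookkeeping; the remaining pieces reduce to basepoint augmentation, relative lifting, or direct applications of Str\o m structures.
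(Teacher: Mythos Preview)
Your approach differs substantially from the paper's, which is essentially a proof by citation: part (i) is referred to \cite[Prop.~16.3]{C-J}, part (iii) is deduced from the abstract Dold theorem in the cofibration category $\mathbf{Top}_B$ \cite[p.~96]{B}, and part (ii) is declared a direct fibrewise translation of Str{\o}m's arguments \cite{S}. Your hands-on treatment of (ii) is in exactly this spirit and is the soundest part of your sketch; the forward direction via the Str{\o}m extension formula is correct, and the halo-interpolation idea for the converse is along the right lines, though the auxiliary target $Z^+$ and the projection $Z^+\to Z$ must be specified before the argument can be checked.

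There are, however, two genuine gaps elsewhere. In the converse of (i) you restrict the test object to $Z\in\mathbf{Top}_w(B)$ so as to invoke the Relative HLP along $s_Z$; but the pointed HLP must hold for \emph{every} $Z\in\mathbf{Top}(B)$, and the whiskering replacement $Z\leadsto Z'$ does not furnish a pointed map $Z\to Z'$, so the reduction to well-pointed $Z$ is not free. One correct route is to work instead with the universal lifting function: the unpointed section of $P_B(X)\to X\times_Y P_B(Y)$ supplied by the fibrewise fibration $f$ can be deformed to a pointed section using the Str{\o}m structure on $(Y,s_Y(B))$ together with the Relative HLP for $f$ itself, and a single pointed lifting function then solves every pointed HLP problem at once. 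In (iii), your phrase ``analogously straighten $H_1,H_2$'' hides the entire content of Dold's theorem: once $g$ is replaced by a pointed $g'$, the restrictions $H_i|_{s(B)\times I}$ are loops rather than constant paths, and there is no a~priori reason these loops are fibrewise null. The Dold argument kills them by a specific track-addition that uses \emph{both} homotopies simultaneously together with the HEP in a prescribed order; a one-line appeal to the HEP of $s_X$ or $s_Y$ alone does not suffice. Citing the abstract Dold theorem, as the paper does, is both shorter and avoids this pitfall.
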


\begin{proof}
Part (i) is given in \cite[Prop 16.3]{C-J}, while part (iii) is
consequence of the abstract Dold's theorem in the cofibration
category $\mathbf{Top}_B$ (\cite[p.96]{B}). A simple inspection
reveals that the reasonings given in \cite{S} can be
straightforwardly extrapolated to the fibrewise setting for the
proof of part (ii).
\end{proof}

These facts lead to a great simplification in this setting and
allow us to prove directly the following result.

\begin{theorem}\label{J-category}
The category $\mathbf{Top}_w(B)$ with the structure inherited by
$\mathbf{Top}_B$ is a $J$-category in the
sense of Doeraene \cite{D}. %in which all objects are cofibrant
\end{theorem}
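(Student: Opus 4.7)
The plan is to transfer the model-categorical and cube-lemma structure of $\mathbf{Top}_B$, established in Proposition \ref{prev} and Theorem \ref{cube}, to the subcategory $\mathbf{Top}_w(B)$, exploiting the fact that $\mathbf{Top}_w(B)$, unlike $\mathbf{Top}_B$, possesses a zero object. The sole obstruction identified in the remark following the Cube Lemma was the absence of a zero object in $\mathbf{Top}_B$; in $\mathbf{Top}_w(B)$ this disappears, since the space $B$ equipped with the identity as section is well-pointed (the identity is trivially a closed fibrewise cofibration) and is simultaneously initial and terminal among fibrewise pointed spaces.

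With the zero object secured, I would verify each axiom of a Doeraene $J$-category by restriction. First, by Proposition \ref{importante1}, the classes $\overline{cof}_B$, $fib_B$ and $he_B$ restricted to well-pointed objects coincide with their fibrewise pointed counterparts, so all axiomatic diagrams can be tested using the already-established unpointed structure. Second, Proposition \ref{importante2} supplies the closure needed: pullbacks along fibrewise fibrations and pushouts along closed fibrewise cofibrations of fibrewise pointed maps remain in $\mathbf{Top}_w(B)$. Consequently, the standard fibrewise homotopy pullback and homotopy pushout constructions, together with the factorization axioms of the Str{\o}m-type structure of Theorem \ref{main}, descend from $\mathbf{Top}_B$ to $\mathbf{Top}_w(B)$ whenever their inputs are well-pointed. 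Binary fibrewise products and coproducts pose no difficulty either, since $p_X$ and $p_Y$ are automatically fibrewise fibrations and the pushouts defining coproducts are along the sections, which are closed fibrewise cofibrations by hypothesis.

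Finally, the join (cube) axiom of a $J$-category is a direct consequence of the Cube Lemma applied inside $\mathbf{Top}_w(B)$. The main technical hurdle is the strictification step underlying Theorem \ref{cube}: one must check that replacing a fibrewise homotopy commutative cube in $\mathbf{Top}_w(B)$ by a strictly commutative one, with the two appropriate edges turned into closed fibrewise cofibrations and the remaining one into a fibrewise fibration, can be arranged within $\mathbf{Top}_w(B)$. This reduces to showing that the fibrewise mapping cylinder and mapping cocylinder of a fibrewise pointed map between well-pointed spaces remain well-pointed, which follows by combining Lemma \ref{cof-comp} with the closure properties of Proposition \ref{importante2}. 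Once this is in place, the join of two fibrewise pointed maps, constructed as in Section 1.2, stays in $\mathbf{Top}_w(B)$ and enjoys the same universal property, completing the verification.
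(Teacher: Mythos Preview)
Your proposal is correct and follows essentially the same route as the paper: combine Propositions~\ref{importante1} and~\ref{importante2} with the ambient $IP$/model structure on $\mathbf{Top}_B$ (Proposition~\ref{prev}/Theorem~\ref{main}) to obtain the Baues cofibration and fibration category structures on $\mathbf{Top}_w(B)$, note that factorizations are provided by the fibrewise mapping cylinder and mapping track (which remain well-pointed), and then invoke Theorem~\ref{cube} for the cube axiom. The paper's proof is terser but cites exactly the same ingredients; your additional remarks on the zero object and on the well-pointedness of the strictified cube are details the paper leaves implicit.
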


\begin{proof}
Considering the above two propositions combined with Proposition
\ref{prev} we have that $\mathbf{Top}_B$ induces in
$\mathbf{Top}_w(B)$ a cofibration and fibration category
structures in the sense of Baues. Observe that the factorizations
of a fibrewise pointed map are taken through the fibrewise mapping
cylinder or the fibrewise mapping track. Again, propositions
\ref{importante1} and \ref{importante2} together with Theorem
\ref{cube} show that $\mathbf{Top}_w(B)$ satisfy the Cube Lemma.
\end{proof}

\begin{definition}{\rm
Let $f:E\rightarrow X$ be a fibrewise map over $B$ and consider an
open subset $U$ of $X$ containing $s_X(B)$. Then $U$ is said to be
\emph{fibrewise pointed sectional} if there exists a morphism
$s:U\rightarrow E$ in $\mathbf{Top}(B)$ such that the following
triangle commutes up to fibrewise pointed homotopy
$$\xymatrix{ {U} \ar@{^(->}[rr]^{in} \ar[dr]_s & & {X} \\
 & {E} \ar[ur]_f & }$$ The \emph{fibrewise pointed sectional category} of $f,$
denoted as $\mbox{secat}^B_B(f)$ is the minimal number $n$ such
that $X$ admits an open cover $\{U_i\}_{i=0}^n$ constituted by
fibrewise pointed sectional subsets. When such $n$ does not exist
then $\mbox{secat}^B_B(f)=\infty .$ }
\end{definition}

When $f$ is a fibrewise pointed fibration, having a local strict
section of $f$ is equivalent to having a local fibrewise pointed
homotopy section of $f.$ On the other hand it is immediate to
check the identity $$\mbox{secat}^B_B(s _X)=\mbox{cat}^B_B(X),$$
\noindent that is, the pointed fibrewise sectional category of
$s_X$ is exactly the fibrewise pointed LS category in the sense of
James-Morris \cite{J-M}.

A \emph{fibrewise pointed contractible space} is any fibrewise
pointed space having the same fibrewise pointed homotopy type of
$B.$ The proof of the following result is analogous to the
fibrewise unpointed case.

\begin{proposition}
Let $f:E\rightarrow X$ be any fibrewise pointed map. Then
$$\mbox{secat}^B_B(f)\leq \mbox{cat}_B^B(X)$$
If $E$ is fibrewise pointed contractible, then
$\mbox{secat}^B_B(f)=\mbox{cat}_B^B(X).$
\end{proposition}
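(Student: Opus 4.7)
The plan is to mirror, step by step, the argument used for the analogous unpointed proposition, replacing every map, homotopy and composition by its fibrewise pointed counterpart so that everything lives in $\mathbf{Top}(B)$. The single preliminary ingredient to re-establish is the pointed version of the monotonicity observation made after the definition of $\mbox{secat}_B(-)$: given a fibrewise pointed homotopy commutative triangle
$$\xymatrix{ {E} \ar[rr]^{\lambda } \ar[dr]_f & & {E'} \ar[dl]^{f'} \\ & {X} & }$$
with $\lambda,f,f'$ morphisms in $\mathbf{Top}(B),$ one has $\mbox{secat}^B_B(f')\leq \mbox{secat}^B_B(f).$ Indeed, if $U$ is a fibrewise pointed sectional open subset of $X$ with local fibrewise pointed homotopy section $s:U\rightarrow E$ of $f,$ then $s':=\lambda s:U\rightarrow E'$ is a fibrewise pointed map and $f's'=f'\lambda s\simeq _B^B fs\simeq _B^B in_U$, so $U$ is fibrewise pointed sectional for $f'$ as well.

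For the first inequality, I apply this monotonicity to the commutative triangle $fs_E=s_X,$ which holds precisely because $f$ is a fibrewise pointed map. Taking $\lambda =s_E,$ $f=s_X$ and $f'=f$ in the diagram above we obtain
$$\mbox{secat}^B_B(f)\leq \mbox{secat}^B_B(s_X)=\mbox{cat}^B_B(X).$$

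For the equality when $E$ is fibrewise pointed contractible, I use the fact that the section $s_E:B\rightarrow E$ is then a fibrewise pointed homotopy equivalence, so there exists a fibrewise pointed map $r:E\rightarrow B$ with $rs_E=id_B$ and $s_Er\simeq _B^B id_E.$ Composing with $f$ and using $fs_E=s_X$ we get $f\simeq _B^B fs_Er=s_Xr,$ which is a fibrewise pointed homotopy commutative triangle through $B$ with legs $r:E\rightarrow B$ and $s_X:B\rightarrow X.$ A second application of the monotonicity lemma gives $\mbox{cat}^B_B(X)=\mbox{secat}^B_B(s_X)\leq \mbox{secat}^B_B(f),$ and combining this with the first part yields the desired equality. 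The only delicate point to verify is that all intermediate maps and homotopies genuinely respect the chosen sections; this is automatic from the construction of $s_E$ and $r$ together with the hypothesis that $f$ is fibrewise pointed, so no serious obstacle arises.
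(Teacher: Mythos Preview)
Your proof is correct and follows exactly the approach the paper intends: the paper states that ``the proof of the following result is analogous to the fibrewise unpointed case,'' and you have carried out precisely that analogy, reproducing the monotonicity observation in $\mathbf{Top}(B)$ and applying it in both directions to the triangle $fs_E=s_X$. One tiny remark: the fibrewise pointed map $r:E\rightarrow B$ you invoke is necessarily $p_E$ itself (the only fibrewise map $E\rightarrow B$), so the content of ``$E$ fibrewise pointed contractible'' is exactly that $s_Ep_E\simeq_B^B id_E$; your argument uses nothing more than this, so everything is in order.
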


As a consequence of Theorem \ref{J-category}, by \cite{D-GC-R-M-R}
or \cite{K} we can define a manageable axiomatic notion of
fibrewise pointed sectional category from two equivalent
approaches: Whitehead's and Ganea's. Moreover, the \emph{fibrewise
pointed n-fat wedge} and the \emph{n-th fibrewise pointed Ganea
fibration} can be chosen as $j_n:T^n_B(f)\rightarrow \prod
^{n+1}_BX$ and $p_n:G^n_B(f)\rightarrow X,$ the ones given in the
unpointed case. The proof of the following theorem is completely
analogous to the one given in Theorem \ref{three-notions} and
therefore is omitted.
%For its statement one has just to replace the words
%\emph{fibrewise map} by \emph{fibrewise pointed map}, and
%\emph{fibrewise homotopy} by \emph{fibrewise pointed homotopy}.
For the particular case $f=s_X$ compare the equivalence of (i) and
(ii) in our theorem with \cite[Prop 6.1 and Prop 6.2]{J-M}

\begin{theorem}
Let $f:E\rightarrow X$ be a fibrewise pointed map in
$\mathbf{Top}_w(B)$ between normal spaces, or a closed fibrewise
pointed cofibration with $X$ normal. Then the following statements
are equivalent:
\begin{enumerate}
\item[(i)] $\mbox{secat}_B^B(f)\leq n$

\item[(ii)] The diagonal map $\Delta _{n+1}:X\rightarrow \prod
_B^{n+1}X$ factors, up to fibrewise pointed homotopy, through the
fibrewise n-fat wedge
$$\xymatrix{ {X} \ar[r] \ar[dr]_{\Delta _n} & {T^n_B(f)} \ar[d]^{j_n} \\ & {\prod _B^{n+1}X} }$$

\item[(iii)] The $n$-th fibrewise Ganea map
$p_n:G^n_B(f)\rightarrow X$ admits a fibrewise pointed homotopy
section.
\end{enumerate}
\end{theorem}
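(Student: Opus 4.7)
The plan is to mirror the proof of Theorem \ref{three-notions} almost verbatim, checking at each step that every map and homotopy constructed respects the sections, so that the unpointed arguments promote to pointed ones in $\mathbf{Top}_w(B)$. Theorem \ref{J-category} tells us $\mathbf{Top}_w(B)$ is a $J$-category, so the Cube Lemma and the Ganea/join constructions survive in the pointed setting; in particular the pullback square of Lemma \ref{WG} lifts to $\mathbf{Top}_w(B)$, and the equivalence (ii) $\Leftrightarrow$ (iii) follows by the weak universal property of fibrewise pointed homotopy pullbacks. For the remaining equivalence I would first reduce to the case where $f$ is a closed fibrewise pointed cofibration with $X$ normal, using the factorization through the fibrewise pointed mapping cylinder (which keeps us inside $\mathbf{Top}_w(B)$ by Proposition \ref{importante2}, and preserves normality when both $E$ and $X$ are normal).

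For (i) $\Rightarrow$ (ii), I would take an open cover $\{U_i\}_{i=0}^n$ of $X$ by fibrewise pointed sectional subsets with pointed sections $s_i:U_i\rightarrow E$ and fibrewise pointed homotopies $H_i:I_B^B(U_i)\rightarrow X$ satisfying $H_i(-,0)=in$, $H_i(-,1)=f s_i$. Applying normality of $X$ one shrinks to $A_i\subseteq \Theta_i\subseteq B_i\subseteq U_i$ with $\{A_i\}$ still covering $X$, picks Urysohn functions $h_i:X\rightarrow I$ with $h_i(A_i)=\{1\}$ and $h_i(X\setminus \Theta_i)=\{0\}$, and forms $L_i$ by the same formula as in Theorem \ref{three-notions}. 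The point to verify is that the resulting $L:\Delta_{n+1}\simeq j_n\varphi$ is actually a fibrewise pointed homotopy: if $s_X(b)\in X\setminus B_i$ then $L_i(s_X(b),t)=s_X(b)$ directly, and if $s_X(b)\in U_i$ then $L_i(s_X(b),t)=H_i(s_X(b),t h_i(s_X(b)))=s_X(b)$ by pointedness of $H_i$. The map $\varphi$ is pointed because each $s_i$ is.

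For (ii) $\Rightarrow$ (i), assume a pointed $\varphi:X\rightarrow T^n_B(f)$ and a pointed homotopy $L:\Delta_{n+1}\simeq_B^B j_n\varphi$. Since $f$ is a closed fibrewise cofibration, Corollary \ref{rem2} provides an open halo $U$ of $E$ together with a fibrewise retraction $r:U\rightarrow E$ and a fibrewise homotopy $H:I_B(U)\rightarrow X$ between the inclusion and $fr$ that fixes $E$. Because $s_X(B)=f(s_E(B))\subseteq E\subseteq U$, both $r$ and $H$ automatically preserve the section (they are the identity on $E$, hence on $s_X(B)$). Setting $U_i=\varphi_i^{-1}(U)$, pointedness of $\varphi$ gives $s_X(B)\subseteq U_i$, and the concatenation $G_i$ built as in Theorem \ref{three-notions} is a fibrewise pointed homotopy between the inclusion and $f s_i$ with $s_i=r\varphi_i$ a pointed local section.

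The only real subtlety, and the place where the pointed proof needs more than a blind translation, is this bookkeeping: one must confirm that the well-pointedness of $E$ and $X$ together with the pointedness of the given data forces the Urysohn interpolation, the halo deformation, and the co-whisker constructions to preserve the sections without extra hypotheses. Once this is observed the argument is a verbatim translation of the unpointed one.
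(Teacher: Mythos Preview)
Your proposal is correct and is precisely the approach the paper intends: the paper omits the proof, stating only that it ``is completely analogous to the one given in Theorem \ref{three-notions},'' and your write-up is a faithful execution of that analogy, with the added bookkeeping (pointedness of $L_i$, of the halo deformation, and of $\varphi$) carried out explicitly. The only thing to add is that the equivalence (ii)$\Leftrightarrow$(iii) is exactly what the paper extracts from the $J$-category structure of $\mathbf{Top}_w(B)$ via the abstract Whitehead--Ganea machinery of \cite{D-GC-R-M-R} or \cite{K}, which matches your use of Theorem \ref{J-category}.
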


\begin{remark}
As in the unpointed case, observe that taking $f=s_X$ in the above
theorem we obtain a Whitehead-Ganea characterization of
$\mbox{cat}^B_B(-).$ Compare with \cite{I-S}.
\end{remark}

We conclude this section with some comments and remarks. Recall
from \cite{I-S} that the monoidal topological complexity of a
space $X$, denoted $\mbox{TC}^M(X)$, is the least integer $n$ such
that $X\times X$ can be covered by $n+1$ open sets
$\{U_i\}_{i=0}^n$ on each of which there is a local section $s_i$
of the free path fibration
$$\pi _X:X^I\rightarrow X\times X,\hspace{15pt}\pi _X(\gamma )=(\gamma (0),\gamma
(1))$$ satisfying that $\Delta _X(X)\subset U_i$ and
$s_i(x,x)=c_x,$ for all $x\in X.$

As we have previously commented the product space $X\times X$ can
be seen as a fibrewise pointed space over $X,$ $d(X),$ with
$\Delta _X$ the diagonal map as the section and $pr_2:X\times
X\rightarrow X$ the projection. We have the equality
$$\mbox{TC}^M(X)=\mbox{cat}_X^X(d(X))$$
\noindent for any space $X$ (see \cite{I-S} for details). In
particular $\mbox{TC}^M(X)=\mbox{secat}_X^X(\Delta _X).$ We assert
that the monoidal topological complexity can also be seen as
$$\mbox{TC}^M(X)=\mbox{secat}^X_X(\pi _X).$$
Indeed, considering $X^I$ as a fibrewise pointed space over $X$
with section $c:X\rightarrow X^I$ sending $x\mapsto c_x$ ($c_x$ is
the constant path at $x$) and projection
$\mbox{ev}_1:X^I\rightarrow X$ the evaluation map $\alpha\mapsto
\alpha (1),$ we have that in the following commutative diagram in
$\mathbf{Top}(X)$
$$\xymatrix{ {X} \ar[rr]^{c} \ar[dr]_{\Delta _X} & & {X^I} \ar[dl]^{\pi _X} \\
 & {X\times X} & }$$
\noindent the homotopy equivalence $c:X\rightarrow X^I$ is
actually a fibrewise pointed homotopy equivalence over $X.$
Trivially $ev_1\hspace{2pt}c=id_X,$ and the formula $H(\alpha
,t)(s)=\alpha ((1-t)s+t)$ defines a fibrewise pointed homotopy
over $X$ between $id_{X^I}$ and the composite $c\hspace{2pt}ev_1.$

\section{Unpointed versus pointed fibrewise sectional category.}

In this last section we will check how close $\mbox{secat}_B(-)$
and $\mbox{secat}_B^B(-)$ are. Obviously, the inequality
$\mbox{secat}_B(-)\leq \mbox{secat}_B^B(-)$ always holds.

\begin{theorem}\label{first-condition}
Let $f:E\rightarrow X$ be a fibrewise pointed map in
$\mathbf{Top}_w(B)$ between normal spaces, or a closed fibrewise
cofibration with $X$ normal. Then
$$\mbox{secat}_B(f)\leq \mbox{secat}_B^B(f)\leq \mbox{secat}_B(f)+1$$
\end{theorem}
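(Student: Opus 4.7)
The first inequality $\mbox{secat}_B(f)\leq \mbox{secat}_B^B(f)$ is immediate from the definitions: a fibrewise pointed sectional open cover is, after forgetting pointedness, a fibrewise sectional cover, since every fibrewise pointed section and every fibrewise pointed homotopy is \emph{a fortiori} one in the unpointed fibrewise sense. My plan therefore focuses on the second inequality, for which I would work directly with the open-cover definition.

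Set $n=\mbox{secat}_B(f)$ and choose a fibrewise sectional cover $\{U_0,\ldots,U_n\}$ of $X$ with local fibrewise homotopy sections $\sigma_i:U_i\rightarrow E$ of $f$ and fibrewise homotopies $H_i:\mbox{in}_{U_i}\simeq_B f\sigma_i.$ Since $X\in\mathbf{Top}_w(B),$ the section $s_X:B\rightarrow X$ is a closed fibrewise cofibration, so Proposition~\ref{strom} furnishes a fibrewise Str{\o}m structure $(\varphi,\Phi)$ with $s_X(B)=\varphi^{-1}(\{0\}).$ I would then define two nested halos of $s_X(B),$ namely $W:=\varphi^{-1}([0,1/2))$ and $V:=\varphi^{-1}([0,1)),$ with $\overline{W}\subseteq V,$ together with the fibrewise retraction $r:=s_X^{-1}\circ\Phi(-,1):V\rightarrow B.$ The composite $\sigma_V:=s_E\circ r:V\rightarrow E$ is then a fibrewise pointed local section of $f,$ with $\Phi|_{I_B(V)}$ as the pointed witness-homotopy, so $V$ is itself fibrewise pointed sectional.

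The pointed cover I propose is $\{V,V_0,V_1,\ldots,V_n\},$ of cardinality $n+2,$ where $V_i:=W\sqcup(U_i\setminus\overline{W}).$ Each $V_i$ is open (both pieces are open, using that $\overline{W}$ is closed), contains $s_X(B)\subseteq W,$ and is a genuinely disjoint union. The essential technical point is exactly this disjointness: the pointed section $\sigma_V$ on $W$ and the original section $\sigma_i$ on $U_i\setminus\overline{W}$ then glue into a single well-defined fibrewise pointed map $\tilde\sigma_i:V_i\rightarrow E$ without any homotopical compatibility requirement on an overlap; similarly $\Phi$ and $H_i$ piece together into a fibrewise pointed homotopy $\tilde H_i:\mbox{in}_{V_i}\simeq_B^B f\tilde\sigma_i$ (the pointedness is supplied by the Str{\o}m condition on $W,$ and is vacuous on $U_i\setminus\overline{W}$ since that piece misses $s_X(B)$). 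Thus each $V_i$ is fibrewise pointed sectional, and a short check shows the family covers $X:$ points $x$ with $\varphi(x)<1$ lie in $V,$ while those with $\varphi(x)=1$ lie outside $\overline{W}$ and hence in some $V_j$ via the $U_j\setminus\overline{W}$ piece.

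The main obstacle is resisting the more natural attempt to take $V_i=U_i\cup W$ and glue $\sigma_V$ with $\sigma_i$ on the overlap $W\cap U_i.$ That approach requires a genuinely homotopical gluing (which need not exist in general), and moreover, if it succeeded, would produce a cover of size only $n+1,$ yielding the strictly stronger bound $\mbox{secat}_B^B(f)\leq\mbox{secat}_B(f)$ (false in general). The Str{\o}m decomposition $X=W\sqcup(\overline{W}\setminus W)\sqcup(X\setminus\overline{W})$ bypasses the homotopical gluing entirely, at the unavoidable cost of omitting from each $V_i$ the intermediate annular region $\overline{W}\setminus W;$ this region must then be covered by the extra open set $V,$ which is precisely the source of the ``$+1$''.
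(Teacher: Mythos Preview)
Your argument for the well-pointed hypothesis is correct and takes a genuinely different route from the paper. The paper first replaces $f$ by a closed fibrewise cofibration $E\hookrightarrow X$ (normality of $E$ is used so that the mapping cylinder stays normal) and works with a halo $N$ of $E$; to manufacture disjoint pieces on which to glue the two homotopies, it then invokes normality of $X$ to shrink the cover $\{U_i\setminus E\}_i\cup\{N\}$ to $\{W_i\}$ with $\overline{W_i}$ inside the originals, and finally carves out a smaller $\mathcal{N}\subseteq N$ disjoint from each $W_i$. You sidestep this shrinking step entirely: the Str{\o}m function $\varphi$ itself already supplies nested halos $W\subseteq\overline{W}\subseteq V$, so $W$ and $U_i\setminus\overline{W}$ are disjoint by construction. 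That is a real simplification, and in fact it renders the normality hypotheses superfluous in this case.

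One point still to address: you only treat the first alternative of the hypothesis. Under the second ($f$ a closed fibrewise cofibration with $X$ normal, but $X$ not assumed well-pointed) there is no Str{\o}m structure for $s_X$ available. The fix is immediate, however: run your argument verbatim using the Str{\o}m structure $(\varphi,\Phi)$ for $f:E\hookrightarrow X$ in place of that for $s_X$. The retraction becomes $r=\Phi(-,1):V\rightarrow E$, and both $r$ and the homotopy $\Phi$ are automatically pointed because $s_X(B)\subseteq E$ and $\Phi$ is rel~$E$. With that substitution your nested-halo trick again avoids any appeal to normality.
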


\begin{proof}
We can suppose without loss of generality that $f:E\hookrightarrow
X$ is a closed fibrewise cofibration with $X$ normal. Therefore,
by Corollary \ref{rem2}, $E$ is a fibrewise strong deformation
retract of an open neighborhood $N$ in $X.$ Take $r:N\rightarrow
E$ a fibrewise retraction and a fibrewise homotopy
$G:I_BN\rightarrow X$ with $G(x,0)=x,$ $G(x,1)=fr(x),$ for all
$x\in N$ and $G(e,t)=e,$ for all $e\in E$ and $t\in I.$

Suppose that $\mbox{secat}_B(f)=n$ and take $\{U_i\}_{i=0}^n$ an
open cover of $X,$ where each $U_i$ is fibrewise sectional with
$s_i:U_i\rightarrow E$ the local fibrewise homotopy section of
$f.$ For each $i\in \{0,...,n\}$ we choose $H_i:I_BU_i\rightarrow
X$ a fibrewise homotopy satisfying $H_i(x,0)=x$ and
$H_i(x,1)=fs_i(x),$ for all $x\in U_i.$

If we consider $V_i=U_i\setminus E$ for $i\in\{0,...,n\}$ and
$V_{n+1}=N,$ then we have that $\{V_i\}_{i=0}^{n+1}$ is an open
cover of $X$, and by normality one can find $\{W_i\}_{i=0}^{n+1}$
a refined open cover of $X$ such that $W_i\subseteq
\overline{W_i}\subseteq V_i,$ for all $i\in \{0,...,n+1\}.$

Now we define the open subset $$\mathcal{N}=N\cap (X\setminus
\overline{W_0})\cap ...\cap (X\setminus \overline{W_n})$$
Obviously, $\mathcal{N}\cap W_i=\emptyset $ and $E\subseteq
\mathcal{N},$ for all $i\in \{0,...,n\}.$

If $O_i=W_i\cup \mathcal{N},$ for $i\in \{0,...,n\}$ and
$O_{n+1}=N$, then $\{O_i\}_{i=0}^{n+1}$ is a cover of $X$
constituted by $n+2$ open fibrewise pointed categorical subsets
(observe that $W_{n+1}\subseteq V_{n+1}=N$). Indeed, it only
remains to check that $O_i$ is fibrewise pointed sectional, for
$i\neq n+1.$ In this case the following fibrewise pointed homotopy
$L_i:I_B(O_i)\rightarrow X$ proves this fact
$$L_i(x,t)=\left\{\begin{array}{ll}
{H_i(x,t),} & {x\in W_i} \\
{G(x,t),} & {x\in \mathcal{N}}
\end{array}\right.$$
\end{proof}

\begin{remark}
Observe that Theorem \ref{first-condition} is also true when we
consider fibrewise pointed embeddings $f:E\hookrightarrow X$ with
$X$ normal and such that $f$ admits an open fibrewise pointed
sectional subset $U\subseteq X.$ For instance, when $X$ is an ENR
(Euclidean Neighborhood Retract) then $\Delta _X:X\rightarrow
X\times X$ is known to satisfy this condition.
\end{remark}

As a corollary of the above result we have that if $X$ is any
normal fibrewise well-pointed space over $B,$ then
$$\mbox{cat}_B^*(X)\leq \mbox{cat}_B^B(X)\leq \mbox{cat}_B^*(X)+1$$
In particular, if $X$ is a locally finite simplicial complex (or
more generally, an ENR), then
$$\mbox{TC}(X)\leq \mbox{TC}^M(X)\leq \mbox{TC}(X)+1.$$
These two latter results have been already proved in \cite{I-S-2}
and \cite{Dr}.

\bigskip

In order to establish the statement of our second theorem we need
the following lemma whose proof can be found in \cite[Th
24.1.2]{M-S}. Here the bracket $[Z,K]_B$ denotes the set of
fibrewise homotopy classes of fibrewise maps $Z\rightarrow K$ over
$B.$

\begin{lemma}\label{fibreWhite}
Let $X$ be a CW-complex over $B$ and $e:Y\rightarrow Z$ an
n-equivalence over $B$ between fibrant spaces. Then
$$e_*:[X,Y]_B\rightarrow [X,Z]_B;\hspace{8pt}[\alpha ]\mapsto [e\alpha ]$$
\noindent is a bijection if $\mbox{dim}(X)<n$ and a surjection if
$\mbox{dim}(X)=n.$
\end{lemma}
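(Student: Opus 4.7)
The plan is to prove the fibrewise Whitehead theorem by the classical skeletal induction adapted to the fibrewise context. First I would reduce to the case where $e$ is a fibrewise Hurewicz fibration: factor $e = p\circ\iota$ using the fibrewise mapping track, with $\iota$ a fibrewise homotopy equivalence and $p:\widehat{Y}\to Z$ a fibrewise fibration. Since $Y$ and $Z$ are fibrant, so is $\widehat{Y}$, and $p$ is still an $n$-equivalence over $B$. Because $\iota_*:[X,Y]_B\to[X,\widehat{Y}]_B$ is already a bijection, replacing $e$ by $p$ does not affect the conclusion, so we may assume that $e$ itself is a fibrewise fibration over $B$. This reduction is what allows us to translate any fibrewise homotopy $e\alpha\simeq_B g$ into a strict equality via the HEP.

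For surjectivity under $\dim X\leq n$, I would build a fibrewise lift $\alpha:X\to Y$ of a given $g:X\to Z$ by induction on the fibrewise skeleta $X^{(k)}$. Suppose $\alpha_k:X^{(k)}\to Y$ with $e\alpha_k=g|_{X^{(k)}}$ has been defined. For each attaching map of a $(k+1)$-cell over $B$, the problem of extending $\alpha_k$ fibrewise over the cell is equivalent to a family of relative lifting problems against $e$, whose obstructions live in $\pi_k$ of the (homotopy) fibre of $e$. Since $e$ is an $n$-equivalence, these fibres have $\pi_i=0$ for $i<n$, so the obstruction vanishes whenever $k<n$, covering all cells up to dimension $n$. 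The lifts for individual cells glue (using the HEP for the closed fibrewise cofibration $X^{(k)}\hookrightarrow X^{(k+1)}$ and the fibrewise fibration $e$ to pass from a fibrewise homotopy to a strict equality), producing $\alpha_{k+1}$, and in the limit $\alpha:X\to Y$ with $e\alpha=g$. For injectivity under $\dim X<n$, given $\alpha_0,\alpha_1:X\to Y$ with $H:e\alpha_0\simeq_B e\alpha_1$, I apply a relative version of the argument to the fibrewise CW-pair $(I_B(X),X\times\{0,1\})$ of relative dimension $\leq\dim X+1\leq n$: the pair $(\alpha_0,\alpha_1)$ is a prescribed partial lift of $H$, and the same obstruction-theoretic induction extends it to $\widetilde{H}:I_B(X)\to Y$, which is the sought fibrewise homotopy $\alpha_0\simeq_B\alpha_1$.

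The main obstacle is to verify rigorously that the cellular obstructions to fibrewise extension genuinely take values in the homotopy groups of the fibres of $e$. This requires invoking the fibrewise cell structure on $X$ (in which cells are of the form $D^{k+1}\times B\to B$, or more generally pullbacks of these) so that extension over a cell becomes a classical lifting problem fibre-by-fibre, parametrized coherently over $B$. The fibrant hypothesis on $Y$ and $Z$ is essential here: it guarantees that the actual fibres of $e$ over points of $B$ coincide with the homotopy fibres, so that the $n$-equivalence hypothesis, naturally a statement about homotopy fibres, translates into vanishing of the cellular obstructions and the inductive step goes through exactly as in the non-fibrewise Whitehead theorem.
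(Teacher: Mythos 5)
First, a remark on the comparison: the paper gives no proof of this lemma at all --- it is quoted verbatim from May--Sigurdsson \cite[Th 24.1.2]{M-S} --- so any self-contained argument is necessarily ``different from the paper''. Your overall strategy (replace $e$ by a fibration via the mapping track, then run the classical skeletal induction with obstructions in the homotopy groups of the fibres of $e$, and treat injectivity as the relative case for the pair $(X\times I, X\times\partial I)$) is the standard proof and does work. However, your own account of where the hypotheses enter contains a genuine confusion, concentrated in two places.

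(1) The sentence ``Because $\iota_*:[X,Y]_B\to[X,\widehat{Y}]_B$ is already a bijection'' is exactly the step that consumes the fibrancy hypothesis, and you leave it unjustified. The inclusion $\iota$ of $Y$ into the mapping track is an ordinary deformation retract, but the retraction shrinks paths in $Z$ and therefore does not commute with the projections to $B$; an ordinary homotopy equivalence over $B$ does \emph{not} in general induce a bijection on fibrewise homotopy classes. What is needed is Dold's theorem: a fibrewise map between fibrant spaces over $B$ which is an ordinary homotopy equivalence is a fibrewise homotopy equivalence. This also forces you to use the ordinary mapping track $\widehat{Y}=Y\times_Z Z^I$ with projection $p_Z\circ \mathrm{ev}_1$ (fibrant over $B$ because $p_Z$ is a Hurewicz fibration), rather than the fibrewise cocylinder $P_B(Z)$, for which the fibration and fibrancy claims would require separate verification. (2) Your closing paragraph asserts that fibrancy ``guarantees that the actual fibres of $e$ over points of $B$ coincide with the homotopy fibres''. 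This is not where fibrancy is used: the fibres relevant to the cellular obstructions are the fibres of $e$ over points of $Z$, not over points of $B$, and they agree with the homotopy fibres because you have replaced $e$ by a Hurewicz fibration, independently of any hypothesis on $p_Y$ or $p_Z$. Relatedly, the worry about ``fibrewise cells $D^{k+1}\times B$'' is a red herring that would actually complicate the obstruction analysis: a CW-complex over $B$ here is just an ordinary CW-complex equipped with an arbitrary map to $B$ (in the paper's application it is $B$ itself mapped into the base by the section), and ordinary cells suffice because of the one observation your sketch never quite states --- a \emph{strict} lift $D^{k+1}\to\widehat{Y}$ of a map into $Z$ through the fibrewise map $p$ is automatically a map over $B$, since $p_{\widehat{Y}}=p_Z\circ p$. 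With these two points repaired, your induction (cells of dimension $\leq n$ for surjectivity; the relative pair of dimension $\leq\dim X+1\leq n$ for injectivity) is correct and recovers the cited statement.
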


If $X$ is a fibrewise pointed space over $B,$ then we say that $X$
is \emph{cofibrant} when the section $s_X$ is a closed cofibration
in $\mathbf{Top}.$ Observe the difference with the notion of being
well-pointed, in which $s_X$ is a closed fibrewise cofibration.
Every well-pointed fibrewise space is cofibrant but, in general,
the converse is not true.

Now we are ready for our result.

\begin{theorem}
Let $f:E\rightarrow X$ be a fibrewise pointed map between pointed
fibrant and cofibrant spaces over $B.$ Suppose, in addition, that
$B$ is a CW-complex, $X$ is a paracompact Hausdorff space and the
following conditions are satisfied:
\begin{enumerate}
\item[(i)] $f:E\rightarrow X$ is a $k$-equivalence ($k\geq 0$);

\item[(ii)] $\mbox{dim}(B)<(\mbox{secat}_B(f)+1)(k+1)-1.$
\end{enumerate}
\noindent Then $\mbox{secat}_B(f)=\mbox{secat}_B^B(f).$
\end{theorem}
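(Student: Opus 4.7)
Set $n:=\mbox{secat}_B(f)$; the inequality $\mbox{secat}_B(f)\leq \mbox{secat}_B^B(f)$ is automatic, so it suffices to establish $\mbox{secat}_B^B(f)\leq n$. By the unpointed Whitehead--Ganea characterization (Theorem \ref{three-notions}) there is a fibrewise homotopy section $s:X\to G_B^n(f)$ of the $n$-th fibrewise Ganea map $p_n:G_B^n(f)\to X$. The plan is to deform $s$ to a fibrewise \emph{pointed} homotopy section, whereupon the pointed Whitehead--Ganea characterization immediately yields $\mbox{secat}_B^B(f)\leq n$.

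The decisive input is a connectivity estimate for $p_n$. Over each point of $B$ (equivalently, fibrewise over $X$) the iterated fibrewise join construction defining $G_B^n(f)$ restricts to the ordinary topological Ganea construction, so the fibre of $p_n$ is the $(n+1)$-fold join $F^{*(n+1)}$ of the fibre $F$ of $f$. Since $f$ is a $k$-equivalence, $F$ is $(k-1)$-connected; the classical join-connectivity formula (a $c_1$-connected space joined with a $c_2$-connected space is $(c_1+c_2+2)$-connected) then gives by induction that $F^{*(n+1)}$ is $((n+1)(k+1)-2)$-connected. Hence $p_n$ is a $((n+1)(k+1)-1)$-equivalence between fibrant spaces over $B$; fibrancy of $G_B^n(f)$ is propagated through the Ganea construction by the paracompactness of $X$.

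The fibrewise Whitehead theorem (Lemma \ref{fibreWhite}), applied with source the CW-complex $B$, with $e=p_n$, and with the strict dimension bound $\dim B<(n+1)(k+1)-1$ from hypothesis (ii), shows that
$$(p_n)_*:[B,G_B^n(f)]_B\longrightarrow [B,X]_B$$
is a bijection. Both $s\circ s_X:B\to G_B^n(f)$ and the canonical section $s_{G_B^n(f)}:B\to G_B^n(f)$ project via $p_n$ to a map fibrewise homotopic to $s_X$ ($s_{G_B^n(f)}$ on the nose, as $p_n$ is fibrewise pointed), so both represent $[s_X]$ in $[B,X]_B$. Injectivity gives $s\circ s_X\simeq_B s_{G_B^n(f)}$.

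Finally, since $X$ is cofibrant (so $s_X$ is a closed cofibration in $\mathbf{Top}$) and both $X$ and $G_B^n(f)$ are fibrant over $B$, the Homotopy Extension Property combined with a standard lifting argument against the Hurewicz fibration $p_{G_B^n(f)}$ extends the fibrewise homotopy $s\circ s_X\simeq_B s_{G_B^n(f)}$ to a fibrewise homotopy of $s$ to a section $s':X\to G_B^n(f)$ with $s'\circ s_X=s_{G_B^n(f)}$, making $s'$ fibrewise pointed; a parallel argument strictifies the witnessing homotopy $p_ns'\simeq_B \mbox{id}_X$ to be constant on $s_X(B)$. This yields the required fibrewise pointed homotopy section and concludes the proof. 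The main obstacle is this last strictification step: the ordinary HEP afforded by cofibrancy must be upgraded to a \emph{fibrewise} HEP by exploiting the Hurewicz fibration structure of the projections, so that all extensions take place over $B$ rather than merely in $\mathbf{Top}$; by contrast, the join-connectivity computation, though crucial, is standard once interpreted fibrewise.
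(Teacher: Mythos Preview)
Your overall strategy---join connectivity, fibrewise Whitehead theorem, then a relative lifting argument---matches the paper's, but there is a genuine gap in your final step, and it stems from a single choice: you apply Lemma~\ref{fibreWhite} over $B$, whereas the paper applies it over $X$.

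The paper first converts $f$ into a Hurewicz fibration $p:\widehat{E}\to X$ and then works with the \emph{classical} iterated join $j_n^p:*_X^n\widehat{E}\to X$, which is a Hurewicz fibration over $X$. Because $\mbox{secat}_B(f)=\mbox{secat}(p)$ (Theorem~\ref{secat}), there is a \emph{strict} section $\sigma$. Viewing $B$ as a space over $X$ via $s_X$, the paper applies Lemma~\ref{fibreWhite} to obtain a homotopy $H:\sigma s_X\simeq_X s_{*_X^n\widehat{E}}$ \emph{over $X$}. The Relative Homotopy Lifting Property is then used with $j_n^p$ itself as the fibration and $pr_X:X\times I\to X$ as the bottom map; the lift $\widetilde{H}$ therefore satisfies $j_n^p\widetilde{H}=pr_X$, so that $\sigma':=\widetilde{H}(-,1)$ is simultaneously a \emph{strict} section of $j_n^p$ and a fibrewise pointed map. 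The strict pointed section then yields the required open cover directly from the join decomposition---no second strictification is needed.

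In your argument, by contrast, Lemma~\ref{fibreWhite} is invoked over $B$ and the RHEP is taken against the projection $p_{G_B^n(f)}:G_B^n(f)\to B$. This produces an $s'$ that is pointed but is only a \emph{fibrewise homotopy} section of $p_n$. Your claim that ``a parallel argument strictifies the witnessing homotopy $p_ns'\simeq_B\mbox{id}_X$ to be constant on $s_X(B)$'' is where the gap lies: what you must now do is contract a \emph{loop} in the space of sections of $p_X$ (the restriction of the concatenated homotopy to $s_X(B)\times I$), rel endpoints and over $B$. That is not the same problem as the previous step (which merely extended a path), and nothing in your hypotheses forces that loop to be null. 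The fix is exactly the paper's device: replace $p_n$ by a Hurewicz fibration and run the Whitehead/RHEP argument over $X$ rather than over $B$, so that the lift is forced to satisfy $p_n\widetilde{H}=pr_X$ and the section comes out strict.

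Two smaller points: your appeal to Theorem~\ref{three-notions} needs $E$ normal (or $f$ a closed fibrewise cofibration), neither of which is assumed---the paper sidesteps this by passing to $\mbox{secat}(p)$ and using the classical Schwarz join; and the fibrancy of $G_B^n(f)$ over $B$ does not follow from paracompactness of $X$ as you suggest, though it is easily arranged by first replacing $p_n$ with a Hurewicz fibration (then $G_B^n(f)$ is fibrant over $X$, hence over $B$).
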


\begin{proof}
Take a factorization of $f$ in $\mathbf{Top}$
$$\xymatrix{
{E} \ar[rr]^{f} \ar[dr]_{\lambda } & & {X} \\
 & {\widehat{E}} \ar[ur]_{p} & }$$ \noindent by a closed
cofibration and homotopy equivalence $\lambda $ followed by a
Hurewicz fibration $p.$ Then $\widehat{E}$ is a fibrewise pointed
space over $B$ considering $\lambda s_E$ as section and the
composite $p_Xp$ as a projection. Moreover $\lambda $ is a
fibrewise pointed homotopy equivalence as it is a homotopy
equivalence between fibrant and cofibrant spaces. Therefore, we
have the equalities
$$\mbox{secat}_B(f)=\mbox{secat}_B(p)=\mbox{secat}(p)\hspace{10pt}\mbox{and}\hspace{10pt}\mbox{secat}^B_B(f)=\mbox{secat}^B_B(p).$$

Suppose $\mbox{secat}(p)= n$ and take the iterated classical join
of $n+1$ copies of $p,$ $j_n^{p}:*^n_X\widehat{E}\rightarrow X.$
Observe that this is a Hurewicz fibration which is a fibrewise
pointed map over $B.$ Then $j^n_{p}$ admits a strict section
$\sigma :*^n_X\widehat{E}\rightarrow X$ which, necessarily, must
be a fibrewise map over $B.$ However, $\sigma $ need not be a
fibrewise \emph{pointed} map over $B.$ In order to solve this
problem we proceed as follows:

Taking into account that $j^n_{p}$ is an
$((n+1)(k+1)-1)$-equivalence over $X$ between fibrant spaces over
$X$, and that $\mbox{dim}(B)<(n+1)(k+1)-1$, we have by Lemma
\ref{fibreWhite} that
$$(j^n_{p})_*:[B,*^n_X\widehat{E}]_X\rightarrow [B,X]_X$$ is a bijection and,
in particular, is injective. Now, if
$s_{*^n_X\widehat{B}}:B\rightarrow *^n_X\widehat{B}$ denotes the
corresponding section for $*^n_X\widehat{B},$ then
$$(j^n_{p})_*([\sigma s_X])=[j^n_{p}\sigma
s_X]=[s_X]=[j^n_{p}s_{*^n_X\widehat{E}}]=(j^n_{p})_*([s_{*^n_X\widehat{E}}])$$
\noindent Therefore there exists a fibrewise homotopy over $X$,
$H:\sigma s_X\simeq _Xs_{*^n_X\widehat{B}}.$

As $s_X:B\hookrightarrow X$ is a closed cofibration and
$j^n_{p}:*^n_X\widehat{E}\rightarrow X$ a Hurewicz fibration, by
the ordinary Relative Homotopy Lifting Property we can consider a
lift in $\mathbf{Top}$
$$\xymatrix{
{X\times \{0\}\cup B\times I} \ar[rr]^(.6){(\sigma ,H)}
\ar@{^(->}[d] & & {*^n_X\widehat{E}} \ar[d]^{j^n_{p}} \\
{X\times I} \ar@{.>}[urr]^{\widetilde{H}} \ar[rr]_{pr_X} & &
{X}}$$ We define $\sigma ':=\widetilde{H}i_1:X\rightarrow
*^n_X\widehat{E}.$ Then one can straightforwardly check that
$\sigma '$ is a fibrewise pointed map over $B$ such that
$j^n_{p}\sigma '=id_X.$ From this fact one can easily check that
there exists an open cover $\{U_i\}_{i=0}^n$ of $X$ in which $U_i$
contains $s_X(B)$ and there is $s_i:U_i\rightarrow  \widehat{E} $
a strict local section of $p,$ satisfying $s_is_X=\lambda s_E,$
for all $i.$ But this implies that $\mbox{secat}^B_B(p)\leq n.$
\end{proof}

\begin{corollary}
Let $B$ be a CW-complex over $X.$ Suppose, in addition, that $X$
is a paracompact Hausdorff pointed fibrant and cofibrant space
over $B$ satisfying the following conditions:
\begin{enumerate}
\item[(i)] $s_X:B\rightarrow X$ is a $k$-equivalence ($k\geq 0$);

\item[(ii)] $\mbox{dim}(B)<(\mbox{cat}_B^*(X)+1)(k+1)-1.$
\end{enumerate}
\noindent Then the equality $\mbox{cat}_B^*(X)=\mbox{cat}_B^B(X)$
holds.
\end{corollary}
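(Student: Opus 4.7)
The plan is to derive this corollary as a direct application of the preceding theorem, specialized to the fibrewise pointed map $f=s_X:B\rightarrow X$, combined with the identities $\mbox{secat}_B(s_X)=\mbox{cat}_B^*(X)$ and $\mbox{secat}_B^B(s_X)=\mbox{cat}_B^B(X)$ that have already been recorded earlier in the paper.

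First I would view $B$ as a fibrewise pointed space over itself, taking the identity as both projection and section. With this structure $B$ is automatically a pointed fibrant and cofibrant space over $B$, since the identity is both a Hurewicz fibration and a closed cofibration in $\mathbf{Top}$. Because $s_X\circ \mbox{id}_B=s_X$, the map $s_X$ is then a fibrewise pointed map between pointed fibrant and cofibrant spaces over $B$, and the remaining ambient assumptions of the theorem ($B$ a CW-complex and $X$ a paracompact Hausdorff space) are inherited verbatim from the hypotheses of the corollary.

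It then remains to match the two numerical conditions. Condition (i) of the theorem, that $f$ be a $k$-equivalence, is exactly hypothesis (i) of the corollary applied to $s_X$. Condition (ii), $\mbox{dim}(B)<(\mbox{secat}_B(s_X)+1)(k+1)-1$, reduces via $\mbox{secat}_B(s_X)=\mbox{cat}_B^*(X)$ to hypothesis (ii) of the corollary. The theorem therefore applies and yields $\mbox{secat}_B(s_X)=\mbox{secat}_B^B(s_X)$, which by the two identifications recalled above rewrites as $\mbox{cat}_B^*(X)=\mbox{cat}_B^B(X)$. Since the corollary is obtained as a pure specialization of the preceding theorem, no step presents a genuine obstacle; the only point worth pausing over is checking that $B$ itself, regarded as a fibrewise pointed space over $B$, really does meet the pointed fibrant-and-cofibrant hypothesis, but with the identity structure this verification is tautological.
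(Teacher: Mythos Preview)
Your proof is correct and is exactly the specialization the paper intends: the corollary is stated without proof immediately after the theorem, and one simply plugs in $f=s_X$ with $E=B$ carrying the identity section and projection, then invokes the identities $\mbox{secat}_B(s_X)=\mbox{cat}_B^*(X)$ and $\mbox{secat}_B^B(s_X)=\mbox{cat}_B^B(X)$ established earlier. Your check that $B$ (with the identity structure) is pointed fibrant and cofibrant over itself is the only verification needed, and it is indeed tautological.
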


Specialising to topological complexity we obtain the following
known result \cite{Dr}.
\begin{corollary}{\rm \cite{Dr}}
Let $X$ be a $k$-connected CW-complex such that
$\mbox{dim}(X)<(\mbox{TC}(X)+1)(k+1)-1.$ Then
$\mbox{TC}(X)=\mbox{TC}^M(X).$
\end{corollary}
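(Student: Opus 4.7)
The plan is to derive this as a direct specialization of the preceding Corollary, by regarding $X\times X$ as the Iwase--Sakai fibrewise pointed space $d(X)=(X\times X,\Delta_X,pr_2)$ over the base $B:=X$. Under this identification the paper has already recorded the identities $\mbox{TC}(X)=\mbox{cat}^*_X(d(X))$ and $\mbox{TC}^M(X)=\mbox{cat}^X_X(d(X))$, so the theorem reduces to $\mbox{cat}^*_X(d(X))=\mbox{cat}^X_X(d(X))$, which is exactly the conclusion of the preceding Corollary applied to $d(X)$ over $X$.

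The proof is therefore a matter of checking the hypotheses of that Corollary in this special case. The base $B=X$ is a CW-complex by assumption, hence paracompact Hausdorff, and so is the total space $X\times X$. The projection $pr_2:X\times X\to X$ is a Hurewicz fibration, so $d(X)$ is fibrant over $X$. Since $X$ is a CW-complex, the diagonal $\Delta_X:X\to X\times X$ is a closed cofibration in $\mathbf{Top}$, making $d(X)$ cofibrant over $X$. Finally, the dimension bound $\mbox{dim}(X)<(\mbox{TC}(X)+1)(k+1)-1$ translates, via $\mbox{TC}(X)=\mbox{cat}_X^*(d(X))$, into precisely condition (ii) of the Corollary.

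The one substantive step is to verify condition (i), namely that the section $s_{d(X)}=\Delta_X:X\to X\times X$ is a $k$-equivalence. For this I would use the standard factorization of $\Delta_X$, up to homotopy, as $X\stackrel{c}{\longrightarrow}X^I\stackrel{\pi_X}{\longrightarrow}X\times X$, where $c$ (the constant-path inclusion) is a homotopy equivalence and $\pi_X$ is the free path-space Hurewicz fibration whose fibre is the based loop space $\Omega X$. Since $X$ is $k$-connected, $\Omega X$ is $(k-1)$-connected, and the long exact sequence of this fibration then shows $\pi_X$, and hence $\Delta_X$, to be a $k$-equivalence. This is the only nontrivial check; the preceding Corollary then immediately delivers $\mbox{cat}^*_X(d(X))=\mbox{cat}^X_X(d(X))$, i.e.\ $\mbox{TC}(X)=\mbox{TC}^M(X)$.
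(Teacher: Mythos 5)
Your proposal is correct and follows exactly the route the paper intends: the paper derives this corollary by specialising the preceding one to the Iwase--Sakai fibrewise pointed space $d(X)$ over $B=X$, using $\mbox{TC}(X)=\mbox{cat}^*_X(d(X))$ and $\mbox{TC}^M(X)=\mbox{cat}^X_X(d(X))$. You simply make explicit the hypothesis checks (in particular that $\Delta_X$ is a $k$-equivalence via the path fibration with fibre $\Omega X$) that the paper leaves to the reader.
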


\end{document}